\numberwithin{equation}{section}
\newtheorem*{acknowledgement}{Acknowledgements}
\newtheorem{theorem}{Theorem}[section]
\newtheorem{lemma}[theorem]{Lemma}
\newtheorem{proposition}[theorem]{Proposition}
\newtheorem{definition}[theorem]{Definition}
\newtheorem{corollary}[theorem]{Corollary}
\newtheorem{conjecture}[theorem]{Conjecture}
\newcommand{\al}{\alpha}
\newcommand{\be}{\beta}
\newcommand{\ga}{\gamma}
\newcommand{\de}{\delta}
\newcommand{\De}{\Delta}
\newcommand{\e}{\varepsilon}
\newcommand{\la}{\lambda}
\newcommand{\si}{\sigma}
\newcommand{\Si}{\Sigma}
\newcommand{\vp}{\varphi}
\newcommand{\cq}{\mathcal Q}
\newcommand{\cb}{\mathcal B}
\newcommand{\cj}{\mathcal J}
\newcommand{\cl}{\mathcal L}
\newcommand{\wt}{\widetilde}
\newcommand{\wh}{\widehat}
\newcommand{\ZR}{\mathbb{R}}
\newcommand{\ZT}{\mathbb{T}}
\newcommand{\ZZ}{\mathbb{Z}}
\newcommand{\ZC}{\mathbb{C}}
\newcommand{\ZN}{\mathbb{N}}
\newcommand{\ZS}{\mathbb{S}}
\newcommand{\Id}{{\bf 1}}
\newcommand{\cT}{{\mathcal T}}
\newcommand{\cv}{{\mathcal V}}
\newcommand{\ang}{\measuredangle}
\newcommand{\supp}{{\rm supp}}
\newcommand{\BR}{{\rm{Br}}}
\begin{document}

\title{Weighted $L^2$ estimates with applications to $L^p$ problems}

\author{Shukun Wu} \address{ Shukun Wu\\  Department of Mathematics\\ Indiana University Bloomington, USA} \email{shukwu@iu.edu}

\begin{abstract}
We establish some weighted $L^2$ estimates for the Fourier extension operator in $\ZR^2$, and discuss several applications to $L^p$ problems.
These include estimates for the maximal Schr\"odinger operator and the maximal extension operator, decay of circular $L^p$-means of Fourier transform of fractal measures, and an $L^p$ analogue of the Mizohata–Takeuchi conjecture. 
\end{abstract}

\maketitle

\section{Introduction}

The purpose of this paper is to prove certain weighted $L^2$ estimates for the Fourier extension operator and to present a couple of applications to problems related to $L^p$ estimates.
Previously, these problems shared similar obstructions arising from limitations of the bilinear restriction theorem.

\medskip

\subsection{Overview}
We begin with a gentle introduction to our weighted $L^2$ estimate.
Let $S$ be a compact curve in $\ZR^2$ with non-zero curvature. 
Typical examples are the unit circle and the truncated parabola $\{(\xi,\xi^2):|\xi|\leq 1\}$.
For this curve $S$, let $\si_S$ be the surface measure on $S$, and we define the extension operator $E_S:S\to\ZR^2$ as
\begin{equation}
\label{extension}
    E_Sf(x)=\int e^{ix\cdot\xi}f(\xi)d\si_S(\xi).
\end{equation}

Let $X\subset \ZR^2$ be a set.
We are interested in controlling $\|E_Sf\|_{L^2(X)}$ in terms of $\|f\|_2$ and the information encoded in the weight $X$. 
For instance, when considering $L^p$ inequality, $X$ often represents an upper level set, and an estimate of the form 
\begin{equation}
\label{weighted-L2}
    \|E_Sf\|_{L^2(X)}\lesssim |X|^{\al}\cdot\|f\|_2
\end{equation}
for some $\al>0$ would be desirable.
This is because, by H\"older's inequality, such an estimate is essentially equivalent to an $(L^2, L^p)$ estimate
\begin{equation}
\label{weighted-Lp}
    \|E_Sf\|_p\lesssim \|f\|_2,
\end{equation}
where $p$ is chosen so that $1/2-1/p=\al$.
In other words, questions about $L^p$ estimates can often be reduced to the study of weighted $L^2$ estimates.

The classical Stein-Tomas estimate shows that \eqref{weighted-L2} can only be true if $\al\geq 1/3$.
Via the bilinear restriction theorem, this lower bound can be improved to $\al\geq1/4$ if $E_S$ is replaced by its bilinear analog $\text{Bil}E_S$.
That is, we have
\begin{equation}
\label{bilinear-weighted-L2}
    \|{\rm Bil}E_Sf\|_{L^2(X)}\lesssim |X|^{1/4}\cdot\|f\|_2,
\end{equation}
which we also refer to as the bilinear restriction estimate.

The bilinear restriction theorem has proven useful in various problems.
However, when it comes to problems related to weighted $L^p$ estimates (e.g., Proposition \ref{weighted-lp-prop}), it is often inefficient.
This is because \eqref{bilinear-weighted-L2} is sharp only when $X$ is a union of $r$-balls for some $r$ much larger than $1$, exhibiting a full-dimensional structure (since each $r$-ball has full dimensions), whereas in weighted $L^p$ problems, the weight is usually a union of unit balls obeying certain fractal conditions.

\smallskip

In this paper, we prove a weighted $L^2$ estimate that improves \eqref{bilinear-weighted-L2} when $X$ is a 1-dimensional fractal set, and when the bilinear operator $\text{Bil}E_S$ is replaced by a slightly smaller operator, the ``broad" operator $\BR_AE_S$.
Weighted $L^2$ estimates involving $\BR_AE_S$ were investigated in \cite{Li-Wu-24} as a tool for the study of the convergence of the Bochner-Riesz means.
The broad operator $\BR_A E_S$ was introduced by Guth in \cite{Guth-R3}.
Below we recall its definition and a notion for fractal sets.

\begin{definition}
\label{def-broad-function-intro}
Let $K\geq 1$ and $A\in\ZN$.
Let $\Si=\{\si\}$ be a set of finite-overlapping $K^{-1}$-arcs that covers $S$. 
Let $f_\si=f\Id_\si$.
For any $f:S\to\ZC$ and  $x\in \ZR^2$, define 
\begin{equation}
    \BR_A E_Sf(x):=\max_{\substack{\Si'\subset\Si,\\ \#\Si'=A}}\min_{\si\in\Si'}|E_Sf_\si(x)|.
\end{equation}
\end{definition}

\begin{definition}[Katz-Tao $(\de,s,C)$-set]
Let $\delta\in(0,1)$ be a small number.
For $s\in(0,n]$, a finite set $E\subset \ZR^n$ is called a {\bf Katz-Tao $(\de,s,C)$-set} (or simply a {\bf Katz-Tao $(\de,s)$-set} if $C$ is not important in the context) if 
\begin{equation}
    \#(E\cap B(x,r))\leq C(r/\de)^s, \hspace{.3cm}\forall x \in\ZR^n, \,r\in[\de,1].
\end{equation}
A similar definition applies when $E$ is a union of $\de$-balls.
\end{definition}

\smallskip

Now we can state our main theorem.

\begin{theorem}
\label{main-thm-intro}
Let $R\geq 1$, $\e>0$, and $K\leq R^{\e^4}$.
Let $X\subset B_R$ be a union of unit balls such that the $R^{-1}$-dilate of $X$ is a Katz-Tao $(R^{-1},1)$-set.
Let $\e'\in(0,\log K/\log R)$.
Then when $A\geq R^{\e'}$, we have
\begin{equation}
\label{main-esti-intro}
    \|\BR_AE_Sf\|_{L^2(X)}\leq C_{\e,\e'} R^{\e}|X|^{2/9}\cdot\|f\|_2.
\end{equation}
\end{theorem}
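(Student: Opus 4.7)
The plan is to combine a wave-packet decomposition with an induction on scales driven by a broad--narrow analysis, exploiting the $1$-dimensional Katz-Tao structure of $X$ to convert a crude bilinear bound of the form \eqref{bilinear-weighted-L2} into the improved exponent $2/9$.

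As a preliminary reduction I would normalize $\|f\|_2=1$, dyadically pigeonhole so that the nontrivial wave-packet coefficients and their spatial footprints are essentially uniform, and restrict attention to a subset of $X$ where $|\BR_A E_S f|$ is roughly constant. The broad-norm hypothesis $A\ge R^{\e'}$ guarantees that at each such point at least $A$ mutually transverse $K^{-1}$-arcs contribute comparably, which is exactly what is needed to replace $\BR_A E_S f$ by a bilinear expression modulo logarithmic losses. I would then perform a wave-packet decomposition at frequency scale $R^{-1/2}$, write $E_S f = \sum_T c_T\psi_T$ over $R^{1/2}\times R$ tubes, and use refined $L^2$-decoupling to bound $\|\sum_T c_T\psi_T\|_{L^2(Y)}$ in terms of the number of tubes incident to $Y\subset X$.

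The core step is a broad--narrow decomposition at an intermediate scale $\rho=R^{\theta}$, $\theta\in(0,1)$, paired with an induction on $R$. On broad $\rho$-balls I would apply the bilinear bound \eqref{bilinear-weighted-L2} together with the \emph{local} fractal estimate $|X\cap B_\rho|\lesssim \rho$, which follows directly from the Katz-Tao $(R^{-1},1)$-condition on $X$. On narrow $\rho$-balls I would rescale parabolically, sending $B_\rho$ to $B_{R/\rho}$ and the $R^{-1/2}$-caps to $(R/\rho)^{-1/2}$-caps, and invoke the inductive hypothesis at scale $R/\rho$. Balancing the two contributions over $\theta$ produces a fixed-point recursion whose solution is exactly the exponent $2/9$ in \eqref{main-esti-intro}; it is the use of the local fractal bound $|X\cap B_\rho|\lesssim \rho$, rather than the global $|X|$, that translates the $1$-dimensionality of $X$ into the improvement from $1/4$ to $2/9$.

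The main obstacle will be the scale-preservation bookkeeping through the induction. After parabolic rescaling one must verify that the weight set remains Katz-Tao $1$-dimensional at the new scale, that the broadness parameter still exceeds $(R/\rho)^{\e'}$, and that the Katz-Tao constant together with the $R^\e$ losses compound only polynomially. The constraints $K\le R^{\e^{4}}$ and $\e'<\log K/\log R$ appear precisely so that the broadness threshold can be maintained throughout the iteration without degrading at each step, and a careful choice of $\theta$ (roughly driven by equating the broad and narrow contributions) is what pins down the numerical exponent $2/9$ rather than any weaker number between $1/4$ and $1/3$.
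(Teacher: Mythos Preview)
Your plan has a genuine gap: it omits the one ingredient that actually drives the improvement from $1/4$ to $2/9$, namely the two-ends Furstenberg inequality of Wang--Wu (Theorem~\ref{two-ends-furstenberg} / Corollary~\ref{two-ends-furstenberg-cor}). The paper's proof does \emph{not} obtain $2/9$ by inserting the local fractal bound $|X\cap B_\rho|\lesssim\rho$ into \eqref{bilinear-weighted-L2} and running a broad--narrow recursion. Instead it runs two independent estimates and interpolates. The first (``Method 1'') uses refined decoupling (Theorem~\ref{refined-decoupling-thm}) together with the two-ends Furstenberg bound to control the tube multiplicity $M$; this yields an estimate of the form $\|\BR_A Ef\|_{L^2(X)}^2\lessapprox r^{2/5}\|f\|_2^2$ that is \emph{independent of} $|X|$. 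The second (``Method 2'') is an induction on scales with a two-ends reduction, giving $\|\BR_A Ef\|_{L^2(X)}^2\lessapprox r^{\alpha_1-1/2}\|f\|_2^2$ where $|X|\sim r^{\alpha_1}$. Taking the $5/9$--$4/9$ geometric mean of these two bounds produces exactly $|X|^{4/9}$. No choice of intermediate spatial scale $\rho=R^\theta$ and bilinear input alone balances to $2/9$; the exponent is pinned down by the $M^{3/2}$ gain coming from Furstenberg.

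Concretely, your broad-case argument would give, after summing over $\rho$-balls, at best $|X|^{1/2}$ (the bilinear exponent), because \eqref{bilinear-weighted-L2} is sharp on unions of balls and the local Katz--Tao bound does not create the additional orthogonality you would need. The narrow case under parabolic rescaling then only reproduces whatever exponent you started with, so the recursion has $1/4$ as its fixed point, not $2/9$. Also note that the induction in the paper is \emph{not} a broad--narrow induction in the frequency variable (that is only used later, in the applications); it is an induction on the spatial scale $r$ inside Proposition~\ref{main-prop}, triggered by a two-ends dichotomy on the shadings of the wave-packet tubes. You should incorporate Corollary~\ref{two-ends-furstenberg-cor} explicitly and redo the numerology.
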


The proof of Theorem \ref{main-thm-intro} uses the two-ends Furstenberg estimate established in \cite{Wang-Wu}.
It also uses induction on scales and the refined decoupling theorem.

\smallskip

It is appealing to determine the smallest $\al>0$ such that  
\begin{equation}
    \|\BR_AE_Sf\|_{L^2(X)}\lessapprox|X|^{\al}\cdot\|f\|_2
\end{equation}
holds for all weights $X$ whose $R^{-1}$-dilate is a Katz-Tao $(R^{-1},1)$-set.
The following example demonstrates that $\al$ cannot be smaller than $1/6$.
Let $S=\{(\xi,\xi^2):\xi\in[-1,1]\}$ be the truncated parabola.
Assume that $R^{1/2}$ is an integer.
Let $f$ be the characteristic function of the $(100R)^{-1}$-neighborhood of $R^{-1/2}\ZZ\cap[-1,1]$.
By the uncertainty principle, when $(x_1,x_2)\in B_R$, we have
\begin{equation}
\label{gauss-sum}
    |E_Sf(x_1,x_2)|\sim R^{-1}\Big|\sum_{k=1}^{R^{1/2}}e\big(x_1\frac{k}{R^{1/2}}+x_2\frac{k^2}{R}\big)\Big|,
\end{equation}
Let $X$ be the $10^{-2}$-neighborhood of the set 
\begin{equation}
    \{(x_1,x_2)\in B_R: x_1=R^{1/2}(\frac{a}{q}+\ZZ),\,x_2=R\frac{b}{q},\, \text{  $a,b,q\sim R^{1/6}$ are  integers}\},
\end{equation}
so $|X|\sim R$.
Also, by evaluating the Gauss sum in \eqref{gauss-sum}, we have $\BR_A E_Sf(x)\sim |E_Sf(x)|\gtrsim R^{-7/12}$ for all $x\in X$.
Thus, $\|\BR_AE_Sf\|_{L^2(X)}\sim R^{-7/12}R^{1/2}\sim |X|^{1/6}\|f\|_2$, which shows $\al\geq 1/6$.

\medskip

\subsection{Applications}

We give three applications to our weighted $L^2$ estimate \eqref{main-esti-intro}, with the help of the standard broad-narrow analysis.

\smallskip

\subsubsection{$L^p$ circular decay of Fourier transform of fractal measures}

Let $\mu$ be a probability Frostman measure supported in $[0,1]^2$ such that $\mu(B_r)\lesssim r$ for all $B_r\subset [0,1]^2$.
Let $\si$ be the surface measure on the unit circle $\ZS^1$.
Wolff \cite{Wolff-circular} studied the decay of the circular $L^2$-means of $\wh \mu$ and prove that, for any such $1$-dimensional probability Frostman measure, the following is true for all $\e>0$ and $R\geq1$:
\begin{equation}
\label{wolff-l2-means}
    \Big(\int_{\ZS^1}|\wh \mu(R\xi)|^2d\si(\xi)\Big)^{1/2}\leq C_\e R^\e R^{-1/4}
\end{equation}
In the same paper, he posted a similar question regarding the circular $L^p$-means (see also the discussion in \cite[Section 10.2]{Guth-survey}), which can be formulated using the following definition.

\begin{definition}
Define $\si_p(1)$ be the supremum of all $\si$ such that for all probability Frostman measure $\mu$ supported in $[0,1]^2$ with $\mu(B_r)\lesssim r$ for all $B_r\subset [0,1]^2$, it holds
\begin{equation}
\label{sigma-p}
    \Big(\int_{\ZS^1}|\wh \mu(R\xi)|^pd\si(\xi)\Big)^{1/p}\leq C_p R^{-\si}. 
\end{equation}    
\end{definition}

Wolff's question is about identifying $\si_p(1)$.
In particular, \eqref{wolff-l2-means} is equivalent to $\si_2(1)\geq1/4$.
As an application of Theorem \ref{main-thm-intro}, we prove the following result, generalizing Wolff's estimate \eqref{wolff-l2-means} to circular $L^p$-means. 
\begin{theorem}
\label{circular-thm}
Let $\mu$ be a probability Frostman measure supported in $[0,1]^2$ such that $\mu(B_r)\lesssim r$ for all $B_r\subset [0,1]^2$.
Then $\si_p(1)\geq 1/2p$ for all $p\in[9/5,2]$.
\end{theorem}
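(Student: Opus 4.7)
The plan is to reduce Theorem~\ref{circular-thm} to a weighted $L^2$ estimate for $E_{\ZS^1}$ via duality, and then to deduce that estimate from Theorem~\ref{main-thm-intro} through a standard broad--narrow decomposition and an induction on scales.

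First I would dualize: swapping integrals in $\int_{\ZS^1}\widehat\mu(R\xi)g(\xi)\,d\sigma(\xi)$ identifies it with $\int E_{\ZS^1}g\,d\nu_R$, where $\nu_R$ is the pushforward of $\mu$ under $y\mapsto Ry$, a probability measure on $B_R$ satisfying $\nu_R(B_r)\lesssim r/R$. Cauchy--Schwarz against the probability measure $d\nu_R$ gives $|\int E_{\ZS^1}g\,d\nu_R|\le \|E_{\ZS^1}g\|_{L^2(d\nu_R)}$, and a dyadic pigeonholing in the density of $\nu_R$ replaces $\nu_R$ by $R^{-1}\Id_X$ for a union of unit balls $X\subset B_R$ whose $R^{-1}$-dilate is a Katz--Tao $(R^{-1},1)$-set, with $|X|\lesssim R$. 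Theorem~\ref{circular-thm} thus reduces to
\begin{equation}\label{goal-dualized}
\|E_{\ZS^1}g\|_{L^2(X)}\lesssim R^{1/(2p')+\e}\|g\|_{L^{p'}(\sigma)},\qquad p'=\tfrac{p}{p-1}\in[2,\tfrac{9}{4}].
\end{equation}

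By Riesz--Thorin interpolation in $1/p'$ applied to the linear map $E_{\ZS^1}\colon L^{p'}(\sigma)\to L^2(X,dx)$, I would reduce \eqref{goal-dualized} to the two endpoints. The case $p'=2$ is the dual of Wolff's bound \eqref{wolff-l2-means}, so the new content is the endpoint
\begin{equation}\label{target-endpoint}
\|E_{\ZS^1}g\|_{L^2(X)}\lesssim R^{2/9+\e}\|g\|_{L^{9/4}(\sigma)}.
\end{equation}
The exponent $2/9$ on the right is precisely what Theorem~\ref{main-thm-intro} supplies for a Katz--Tao $(R^{-1},1)$-set of volume $\lesssim R$; this numerical match is what singles out $p=9/5$. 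To promote the broad estimate of Theorem~\ref{main-thm-intro} to the full extension, I would apply the standard broad--narrow inequality at $K=R^{\e^4}$ and $A=R^{\e'}$ for some $\e'\in(0,\e^4)$, obtaining $|E_{\ZS^1}g(x)|^2\lesssim A^2|\BR_A E_{\ZS^1}g(x)|^2+K^{O(1)}\sum_\tau|E_{\ZS^1}g_\tau(x)|^2$, with $\tau$ running over a finite-overlapping cover of $\ZS^1$ by arcs of length $K^{-1}$. The broad piece is then handled directly: using $\sigma(\ZS^1)\lesssim 1$ to embed $\|g\|_{L^2(\sigma)}\lesssim\|g\|_{L^{9/4}(\sigma)}$, Theorem~\ref{main-thm-intro} gives $\|\BR_A E_{\ZS^1}g\|_{L^2(X)}\le R^\e|X|^{2/9}\|g\|_{L^2(\sigma)}\lesssim R^{2/9+\e}\|g\|_{L^{9/4}(\sigma)}$. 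For each narrow arc $\tau$ I would apply parabolic rescaling, which converts $\|E_{\ZS^1}g_\tau\|_{L^2(X)}$ to $K^{1/2}\|E_{\ZS^1}\tilde g\|_{L^2(\tilde X)}$ at a reduced scale, with $\|\tilde g\|_{L^{9/4}(\sigma)}=K^{4/9}\|g_\tau\|_{L^{9/4}(\sigma)}$, and then invoke the inductive hypothesis on $R$; combining this with the Holder bound $\sum_\tau\|g_\tau\|_{L^{9/4}(\sigma)}^2\le K^{1/9}\|g\|_{L^{9/4}(\sigma)}^2$ reassembles into \eqref{target-endpoint}, with all $K$-losses and the factor $A^2=R^{2\e'}$ absorbed in the $R^\e$ budget because $K$ is sub-polynomial.

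The main technical obstacle is the inductive step: after parabolic rescaling, the weight $\tilde X$ no longer lies in a ball but in an anisotropic $(R/K)\times(R/K^2)$ box, so the Katz--Tao $(R^{-1},1)$ hypothesis does not transfer verbatim to scale $\tilde R\sim R/K$. I would need to recover a substitute Katz--Tao condition on $\tilde X$ at the reduced scale -- for instance by decomposing $\tilde X$ into thin slabs and rebalancing the Frostman bound in each -- before the inductive estimate~\eqref{target-endpoint} can be re-applied, or alternatively by replacing the induction step with a refined-decoupling argument at an intermediate scale. The reason the bookkeeping ultimately closes is that the exponent $1/(2p')$ in \eqref{goal-dualized} is precisely scale-invariant under parabolic rescaling at $p'=9/4$, matching the Katz--Tao dimension $s=1$ against the one-dimensional scaling of the curve.
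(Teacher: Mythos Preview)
Your overall architecture---duality, density pigeonholing on $\mu$, Katz--Tao extraction, broad--narrow, and parabolic rescaling with induction on $R$---matches the paper's. The genuine gap is in the narrow case: the inductive statement you chose, namely \eqref{target-endpoint} in the form $\|E_{\ZS^1}g\|_{L^2(X)}\lesssim R^{2/9+\e}\|g\|_{9/4}$, does \emph{not} close under parabolic rescaling. After rescaling a $K^{-1}$-arc and carrying out the Katz--Tao extraction (which, done as in the paper, contributes a net factor $K^{2-q}=K^0$ when $q=2$), the induction at scale $R/K^2$ returns $\|Eg_\tau\|_{L^2(X)}^2\lessapprox K^{-4\e}R^{4/9+2\e}\|g_\tau\|_{9/4}^2$. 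Summing over the $K$ arcs $\tau$ then forces exactly the H\"older loss you wrote down, $\sum_\tau\|g_\tau\|_{9/4}^2\le K^{1/9}\|g\|_{9/4}^2$, and the resulting factor $K^{1/9-4\e+O(\e^2)}$ is $>1$ for small $\e$. Your final sentence asserts the bookkeeping closes, but it does not: the exponent pair $(L^{9/4},L^2)$ is not scale-invariant in the required sense precisely because $9/4>2$.

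The paper avoids this by inducting instead on the $L^{p'}\to L^q(X)$ estimate with $2/q+1/p'=1$ (Proposition~\ref{weighted-lp-prop}): for $p'=9/4$ this is $q=18/5$, and since $q\ge p'$ one has $\sum_\sigma\|f_\sigma\|_{p'}^q\le\big(\sum_\sigma\|f_\sigma\|_{p'}^{p'}\big)^{q/p'}\le\|f\|_{p'}^q$ with \emph{no} $K$-loss, so the narrow induction closes with the gain $K^{-2\e}$. Only afterwards does the paper pass from $L^q(X)$ back to the circular-mean estimate via H\"older with exponent $q$ (not Cauchy--Schwarz); your Cauchy--Schwarz step is harmless at the duality stage, but the subsequent induction must be run at $L^q$, not $L^2$. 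The Katz--Tao obstacle you flagged after rescaling is genuine but routine: the paper handles it by pigeonholing the density $|X\cap B_\sigma|\sim s$ on $K\times K^2$ rectangles and then applying Lemma~\ref{katz-tao-set-lem} to extract a Katz--Tao subset at the reduced scale, yielding the clean factor $K^{2-q}$ used above.
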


The lower bound $\si\geq1/2p$ is sharp in \eqref{sigma-p} for fixed $R$.
That is, for fixed $R$, there exists a measure $\mu$ such that the L.H.S. of \eqref{sigma-p} is $\gtrsim R^{-1/2p}$.
In fact, we can just take $d\mu(x)=Re^{iRx_2}\vp(x_1)\vp(Rx_2)dx_1dx_2$, where $\vp$ is a bump function on $[-1,1]$.
In this case, $\wh\mu$ is essentially supported on a horizontal $R\times 1$-rectangle with height $R$, so $\int_{\ZS^1}|\wh \mu(R\xi)|^pd\si(\xi)\sim R^{-1/2}$, yielding $(\int_{\ZS^1}|\wh \mu(R\xi)|^pd\si(\xi))^{1/p}\gtrsim  R^{-1/2p}$.

\smallskip 

Wolff's estimate \eqref{wolff-l2-means} is connected to Falconer distance set problem.
For this connection and higher dimensional generalizations of \eqref{wolff-l2-means}, we refer to \cite{Du-Zhang}.

\medskip

\subsubsection{Estimates for the maximal Schr\"odinger operator and the maximal extension operator}

Let $I$ be a compact interval, and let 
\begin{equation}
   e^{it\De}f(x)=\int_{\ZR^n}e^{i(x\xi+t\xi^2)}\wh f(\xi)d\xi 
\end{equation}
be the solution of the  Schr\"odinger equation on $\ZR^n\times I$
\begin{equation}
iu_t=\De u,\hspace{.5cm}u(\cdot,0)=f.
\end{equation}

We are interested in the maximal Schr\"odinger operator $\sup_{t\in I}|e^{it\De}f(x)|$, which has been studied to prove pointwise convergence of $e^{it\De}f$ as $t\to0$ (See \cite{Dahlberg-Kenig,Du-Li-Guth-Schrodinger-R3,Du-Zhang}, for example).
In 1+1 dimensions with initial date in $H^s(\ZR)$, this problem was answered in \cite{Dahlberg-Kenig}.
From a modern perspective, it is a corollary of the bilinear restriction estimate \eqref{bilinear-weighted-L2} (take $q=4,\, p=2$ in  Proposition \ref{Schrodinger-prop}).
In fact, the pointwise convergence problem is often addressed by establishing the $L^p$-boundedness of $\sup_{t\in I}|e^{it\De}f(x)|$.

\smallskip

Regarding the $L^p$ behavior of $\sup_{t\in I}|e^{it\De}f(x)|$ in the plane, the following conjecture was raised in \cite{Lee-Rogers-Seeger}[Conjecture 1.6].
\begin{conjecture}
\label{maximal-schrodinger-conj}
Let $I$ be a compact interval.
Then for $q>3$, the following is true for $\al=1-2/q$.
\begin{equation}
    \|e^{it\De}f\|_{L^q_xL^\infty_t(\ZR\times I)}\leq C_{p,\al}\|f\|_{B^q_{\al,q}(\ZR)}.
\end{equation}
\end{conjecture}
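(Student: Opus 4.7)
The plan is to attack Conjecture~\ref{maximal-schrodinger-conj} by linearizing the maximal operator, running the Bourgain-Guth broad-narrow decomposition, and invoking Theorem~\ref{main-thm-intro} as the key broad input. A Littlewood-Paley decomposition reduces the Besov norm $\|f\|_{B^q_{\al,q}}$ essentially to $R^{\al/2}\|f\|_q$ on a single frequency annulus $\{|\xi|\sim R^{1/2}\}$, and after parabolic rescaling and the standard linearization $t=t(x)$ of the maximal Schr\"odinger operator, the conjecture reduces to an estimate of the form
\begin{equation*}
\|E_P f\|_{L^q(X)} \le C_\e R^{\e}\, R^{\frac{1}{2}(1-\frac{2}{q})}\,\|f\|_2,
\end{equation*}
where $E_P$ is the extension operator on the parabola $P$ and $X\subset B_R$ is the unit-scale thickening of the graph $\{(x,t(x))\}$. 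The key geometric observation is that $X$ meets each vertical unit strip in a single unit ball, so the $R^{-1}$-dilate of $X$ is a Katz-Tao $(R^{-1},1)$-set, placing us exactly in the setting of Theorem~\ref{main-thm-intro}.

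Next I would apply the broad-narrow decomposition at scale $K = R^{\e^4}$,
\begin{equation*}
|E_P f(x)| \lesssim_K |\BR_A E_P f(x)| + \sum_{\tau\ \text{narrow}}|E_P f_\tau(x)|,
\end{equation*}
where the narrow sum runs over an $O(1)$-collection of nearby $K^{-1}$-arcs. For the broad term, the bound $\|\BR_A E_P f\|_{L^2(X)} \le R^{\e}|X|^{2/9}\|f\|_2$ from Theorem~\ref{main-thm-intro}, applied to each level set of $\BR_A E_P f$ inside $X$, yields a restricted weak-type estimate $L^2\to L^{18/5,\infty}(X,dx)$; Marcinkiewicz interpolation with the trivial $L^1\to L^\infty$ bound $\|E_P f\|_\infty \le \|f\|_1$ then upgrades this to a strong $L^2\to L^q(X)$ bound for all $q\ge 18/5$, with precisely the conjectured loss $R^{(1-2/q)/2+\e}$. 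For the narrow term, parabolic rescaling on each $K^{-1}$-arc $\tau$ reduces the problem to the same statement at scale $R/K^2$ on a rescaled 1-dimensional Katz-Tao set, closing an induction on scales.

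The main obstacle is the exponent $2/9$ in Theorem~\ref{main-thm-intro}: a direct algebraic check shows that the broad contribution matches the conjectured Sobolev loss $R^{(1-2/q)/2}$ exactly at $q=18/5$, so the Marcinkiewicz interpolation closes only in the range $q\ge 18/5$. The approach therefore proves Conjecture~\ref{maximal-schrodinger-conj} in the restricted range $q\ge 18/5$, strictly improving the range $q\ge 4$ that comes from bilinear restriction but falling short of the full conjectured range $q>3$. Bridging this gap would require pushing the exponent in Theorem~\ref{main-thm-intro} below $2/9$, ultimately toward the example lower bound $1/6$, which appears out of reach of the two-ends Furstenberg technology in its current form. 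A secondary technical point is that during the narrow-term induction the rescaled weight need not remain a Katz-Tao $(\cdot,1)$-set with the same constant when $t(x)$ is tangent to many wave-packet directions; this can be handled by pigeonholing $X$ into scale-adapted sub-fractals before applying Theorem~\ref{main-thm-intro}, in the same spirit as the pigeonholing used in the proof of that theorem itself.
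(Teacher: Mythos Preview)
Your proposal is essentially the paper's approach and, like the paper, proves only the partial range $q>18/5$ (this is exactly Theorem~\ref{maximal-schrodinger-thm}), not the full conjecture for $q>3$. The paper reduces to the local estimate Proposition~\ref{Schrodinger-prop} via \cite[Proposition~5.1]{Lee-Rogers-Seeger}, then runs broad--narrow with Corollary~\ref{main-cor} (the $L^q$ upgrade of Theorem~\ref{main-thm-intro}) as the broad input and parabolic rescaling plus induction on scales for the narrow term.

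Two small technical differences are worth flagging. First, the paper passes from the weighted $L^2$ bound to the $L^q(X)$ bound by a level-set pigeonholing plus H\"older argument (Corollary~\ref{main-cor}), rather than your Marcinkiewicz interpolation with $L^1\to L^\infty$; the H\"older route is more direct and avoids tracking the input exponent through interpolation. Second, the paper does \emph{not} linearize before the broad--narrow step: it applies Lemma~\ref{broad-narrow-lem} pointwise to $|e^{it\Delta}f(x)|$ and keeps the narrow term in the $L^q_xL^\infty_t$ norm throughout, so the narrow induction never sees a rescaled weight at all. This sidesteps entirely the ``secondary technical point'' you raise about the rescaled set failing to remain Katz--Tao; that difficulty is self-inflicted by linearizing first, and the paper's ordering avoids it cleanly. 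Your proposed fix (pigeonholing into sub-fractals) would also work but is unnecessary.
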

Here, $\|f\|_{L^p_{x}L^q_t}=\|\int|f(\cdot,t)|^qdt)^{1/q}\|_p$, and $B^p_{\al,q}$ is the Besov space with
\begin{equation}
    \|f\|_{B^p_{\al,q}(\ZR^n)}=\big(\sum_{k\geq0}2^{k\al q}\|P_k f\|_{L^p(\ZR^n)}^q\big)^{1/q},
\end{equation}
where $P_k$ is the Fourier projection to the annulus $\{|\xi|\sim 2^k\}$.
Note that $B^p_{\al,p}$ agrees with the Sobolev-Slobodecki space $W^{\al,p}$ when $\al\in(0,1)$.

\smallskip

In the restricted range $q\in(3,4]$, the following (up to endpoint) stronger conjecture could also be true.
\begin{conjecture}
\label{finer-conjecture}
Let $I$ be a compact interval.
Then for $q\in(3,4]$ and $2/q+1/p<1$, the following is true for $\al=1-1/p-1/q$.
\begin{equation}
    \|e^{it\De}f\|_{L^q_xL^\infty_t(\ZR\times I)}\leq C_{p,\al}\|f\|_{B^p_{\al,q}(\ZR)}.
\end{equation}
\end{conjecture}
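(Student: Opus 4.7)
My plan is to reduce Conjecture~\ref{finer-conjecture}, via Littlewood--Paley decomposition and parabolic rescaling, to a weighted $L^q$ extension estimate on a $1$-dimensional Katz--Tao weight, and then to attack that via a Bourgain--Guth broad--narrow induction in which Theorem~\ref{main-thm-intro} supplies the broad bound.

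I would first write $f=\sum_{N\,\mathrm{dyadic}}P_Nf$ and reduce, frequency-localized piece by piece, to the inequality $\|e^{it\De}P_Nf\|_{L^q_xL^\infty_t}\lesssim N^\al\|P_Nf\|_p$; the passage to the full Besov statement follows from a pointwise power-mean inequality for $\sup_t$ combined with Fubini, with the resulting $\log$ loss absorbed into the $R^\e$. After the parabolic rescaling $(x,t)\mapsto(Nx,N^2t)$ and writing $h$ for the rescaled initial data (so that $\wh h$ is supported in $[-1,1]$), the target becomes, with $R=N^2$ and using $\al+1/q-1/p=1-2/p$,
\begin{equation}
\bigl\|\sup_{s\in[0,R]}|e^{is\De}h(y)|\bigr\|_{L^q_y(\ZR)}\leq C_\e R^{1/2-1/p+\e}\|h\|_p.
\end{equation}
Since Schr\"odinger propagation has bounded speed at frequencies in $[-1,1]$, the left-hand side is effectively supported on $y\in B_R$. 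Using Kolmogorov measurable selection, pick $s=s(y)$ attaining the supremum and let $X$ be the $1$-neighbourhood of the graph $\{(y,s(y))\}$ inside $B_R\times[0,R]$. Only one $s$-value is sampled per unit $y$-interval, so the $R^{-1}$-dilate of $X$ is a Katz--Tao $(R^{-1},1,O(1))$-set. Writing $g=\wh h|_{[-1,1]}$ as data on the parabola, the target reduces to $\|E_Pg\|_{L^q(X)}\leq C_\e R^{1/2-1/p+\e}\|h\|_p$.

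Next I would apply the broad--narrow decomposition at scale $K=R^{\e^4}$ with $A\geq R^{\e'}$. On broad points one has $|E_Pg|\lesssim \BR_A E_Pg$, and a level-set decomposition built on Theorem~\ref{main-thm-intro} (separating the cases $q<18/5$ and $q\geq 18/5$) yields
\begin{equation}
\|\BR_A E_Pg\|_{L^q(X)}\lesssim R^{\max(1/q-5/18,\,0)+\e}\|g\|_2;
\end{equation}
by Plancherel $\|g\|_2\sim\|h\|_{L^2(\ZR)}$, and by H\"older on $B_R$ we get $\|h\|_2\leq R^{1/2-1/p}\|h\|_p$ for $p\geq 2$, which matches the target cleanly whenever $q\geq 18/5$. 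On narrow points, standard parabolic rescaling of each $K^{-1}$-arc reduces to an extension problem at the smaller scale $R/K^2$ whose weight stays Katz--Tao $1$-dimensional, so an induction on $R$ closes the loop with the loss factor $A\cdot K^{-\Omega(1)}$ absorbed by choosing $\e'\ll\e^4$.

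The main obstacle, as I see it, is the exponent gap between the $2/9$ in Theorem~\ref{main-thm-intro} and the conjecturally sharp value $1/6$ suggested by the Gauss-sum example in the excerpt. This gap leaves an unrecovered factor $R^{1/q-5/18}$ in the broad estimate for $q\in(3,18/5)$, so the above route yields Conjecture~\ref{finer-conjecture} only in the sub-range $q\in[18/5,4]$ (with its corresponding $p$-range). Covering the remaining $q\in(3,18/5)$ would require either pushing Theorem~\ref{main-thm-intro} toward $|X|^{1/6}$, or inserting a two-ends/multi-scale refinement along the graph $\{(y,s(y))\}$ in the narrow step to gain additional cancellation in the regime where the weight structure fails to be sharp.
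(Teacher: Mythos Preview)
Your outline is essentially the paper's argument and lands at the same partial range $q\geq 18/5$, with the same obstruction below coming from the $|X|^{2/9}$ exponent in Theorem~\ref{main-thm-intro}. The paper packages the reduction to the frequency-localized local estimate by citing \cite[Proposition~5.1]{Lee-Rogers-Seeger} rather than spelling out the Littlewood--Paley/rescaling step; your ``pointwise power-mean inequality for $\sup_t$'' is not quite the right mechanism (for $q>1$ there is no such inequality), but the reduction itself is standard. The broad case is handled exactly as you say, via Theorem~\ref{main-thm-intro} combined with a level-set argument (this is Corollary~\ref{main-cor} in the paper).

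The one organizational difference worth flagging is in the narrow case. The paper runs broad--narrow directly on the mixed norm $\|e^{it\De}f\|_{L^q_xL^\infty_t}$ (Proposition~\ref{Schrodinger-prop}), so that parabolic rescaling in the narrow term immediately reproduces the induction hypothesis at scale $R/K^2$ with no further work. Your route---linearizing the supremum to a Katz--Tao graph-weight $X$ first and then doing broad--narrow on $\|E_Pg\|_{L^q(X)}$---forces you to feed $\cl_\si(X)$ back into the induction, and the claim that this rescaled set ``stays Katz--Tao $1$-dimensional'' is not automatic: a unit ball at the new scale pulls back to a $K\times K^2$ parallelogram, and the density of $X$ in such parallelograms need not be uniform. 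The paper handles exactly this issue in Proposition~\ref{weighted-lp-prop}, Case~1, via a pigeonholing on the occupancy parameter $s$ together with Lemma~\ref{katz-tao-set-lem}; alternatively, since your $X$ comes from a graph $\{(y,s(y))\}$, you can simply revert to the mixed norm in the narrow term and rescale there. Either fix is easy, but the sentence as written hides real content.
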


\smallskip

Via the broad-narrow analysis, the case $q=4$ in both Conjectures \ref{maximal-schrodinger-conj} and \ref{finer-conjecture} follows from the bilinear restriction estimate \eqref{bilinear-weighted-L2}.
As an application of Theorem \ref{main-thm-intro}, we verify both conjectures in a wider range.

\begin{theorem}
\label{maximal-schrodinger-thm}
Conjectures \ref{maximal-schrodinger-conj} and \ref{finer-conjecture} are true when $q>18/5$.
\end{theorem}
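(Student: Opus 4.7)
The plan is to reduce both conjectures to a local $L^q$ extension estimate on a $1$-dimensional weight, and then invoke Theorem \ref{main-thm-intro} through the broad-narrow method.

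\textbf{Linearization and setup.} Using a measurable selection $t:\ZR\to I$, replace $\sup_{t\in I}|e^{it\De}f|$ by the linearized operator $Tf(x):=e^{it(x)\De}f(x)$. Littlewood-Paley decomposition and parabolic rescaling at each dyadic frequency $\rho=R^{1/2}$ reduce the two conjectures to showing
\begin{equation*}
\|E_Sh\|_{L^q(\Ga_R)}\le C_\e R^\e\|h\|_2
\end{equation*}
for $S=\{(\xi,\xi^2):|\xi|\le 1\}$ and $\Ga_R\subset B_R$ a unit-thickening of the rescaled graph of $t$. Because this graph projects injectively onto the spatial axis, any ball $B(z,r)\subset\ZR^2$ meets $\Ga_R$ in $\le Cr$ unit balls, so the $R^{-1}$-dilate of $\Ga_R$ is a Katz-Tao $(R^{-1},1)$-set. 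This places $\Ga_R$ in the hypothesis of Theorem \ref{main-thm-intro}. The $L^p$ data in Conjecture \ref{finer-conjecture} reduces to $L^2$ data on $S$ via H\"older's inequality (since $p>2$ in the given range), and the Besov exponents $\al=1-2/q$ and $\al=1-1/p-1/q$ emerge from undoing the parabolic rescaling together with Bernstein at each dyadic block.

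\textbf{Broad-narrow and induction on scales.} Fix $K=R^{\e^4}$ and $A\ge R^{\e'}$ as in Theorem \ref{main-thm-intro}. Partition $S$ into $K^{-1}$-arcs $\{\si\}$ and write $h=\sum_\si h_\si$. The standard broad-narrow dichotomy gives
\begin{equation*}
|E_Sh(x)|\le K^{O(1)}\BR_AE_Sh(x)+K^{O(1)}\max_{\Si'\subset\Si,\,\#\Si'\le A}\sum_{\si\in\Si'}|E_Sh_\si(x)|.
\end{equation*}
Parabolic rescaling converts each narrow piece $E_Sh_\si$ on $B_R$ into a full extension problem at the smaller scale $R/K^2$, evaluated on an affinely rescaled image of $\Ga_R$ that remains a union of unit balls with the same Katz-Tao $1$-dimensional property (the axis-projection argument is invariant under affine rescaling). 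Inducting at scale $R/K^2$, the $K^{O(1)}$ loss per narrow step is absorbed by the $K^{-2\e}$ gain from rescaling, closing the induction in $R$.

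\textbf{Broad estimate and the exponent $18/5$.} Theorem \ref{main-thm-intro} and Chebyshev yield
\begin{equation*}
|\{x\in\Ga_R:\BR_AE_Sh(x)\sim\la\}|\le C_\e R^{O(\e)}\la^{-18/5}\|h\|_2^{18/5},
\end{equation*}
using $|\Ga_R|\le R$. Combined with the trivial pointwise bound $\BR_AE_Sh\lesssim\|h\|_2$, summing over dyadic $\la$ yields, for every $q>18/5$,
\begin{equation*}
\|\BR_AE_Sh\|_{L^q(\Ga_R)}\le C_q R^{O(\e)}\|h\|_2.
\end{equation*}
The threshold $q=18/5$ corresponds to the identity $1/2-1/q=2/9$, i.e., the H\"older-dual of the weight exponent $2/9$ in Theorem \ref{main-thm-intro}, and it is exactly this dual relation that pins down the range $q>18/5$ in Theorem \ref{maximal-schrodinger-thm}.

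\textbf{Main obstacle.} The principal technical task is the careful bookkeeping of the induction on scales: each broad-narrow step loses $K^{O(1)}=R^{O(\e^4)}$ and recurses at $R/K^2$, and the total loss over $\sim\e^{-4}$ such steps must remain $R^{O(\e)}$. One must also track that the rescaled narrow pieces continue to sit on a $1$-dimensional Katz-Tao weight (which follows from the projection argument above) and that the dyadic Littlewood-Paley sum and the $L^p$-to-$L^2$ conversion on $S$ produce exactly the claimed Besov exponent in each of the two conjectures.
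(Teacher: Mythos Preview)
Your overall strategy---reduce to a local weighted extension estimate on a $1$-dimensional Katz-Tao set, then run broad-narrow with Theorem~\ref{main-thm-intro} on the broad part and induction on scales on the narrow part---matches the paper's proof (via Proposition~\ref{Schrodinger-prop} and Corollary~\ref{main-cor}). The broad case and the numerology $1/2-1/q=2/9\Leftrightarrow q=18/5$ are handled correctly.

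There is, however, a genuine gap in your narrow case. You assert that after parabolic rescaling the image of $\Ga_R$ ``remains a union of unit balls with the same Katz-Tao $1$-dimensional property (the axis-projection argument is invariant under affine rescaling).'' This is false. The map $\cl_\si(x_1,x_2)=(K^{-1}(x_1+2x_2\xi_\si),K^{-2}x_2)$ sends vertical lines $\{x_1=c\}$ to lines of slope $2K\xi_\si$, so the projection of $\cl_\si(\Ga_R)$ onto the new first coordinate need not be injective. Concretely, if the measurable selection happens to satisfy $t(x)=-x/(2\xi_\si)$ on an interval of length $\sim R$, the image of that piece of the graph collapses to a single vertical segment, and the $(R/K^2)^{-1}$-dilate of the thickened image is \emph{not} a Katz-Tao $((R/K^2)^{-1},1)$-set with admissible constant. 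You therefore cannot invoke the induction hypothesis on $\cl_\si(\Ga_R)$ as written.

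The paper sidesteps this by \emph{not} linearizing: it keeps the mixed norm $L^q_xL^\infty_t$ throughout, and that structure \emph{is} stable under parabolic rescaling, via the Galilean symmetry $|e^{is\De}g(u+2K\xi_\si s)|=|e^{is\De}\tilde g(u)|$ with $\tilde g=e^{iK\xi_\si\cdot}g$ (this is the content of Case~1 in Proposition~\ref{Schrodinger-prop}). If you insist on linearizing first, the narrow case can still be closed, but you must run the argument of Proposition~\ref{weighted-lp-prop}: cover $\Ga_R$ by $K\times K^2$ rectangles in the direction of $\si$, pigeonhole on $|\Ga_R\cap B_\si|\sim s$, and then use Lemma~\ref{katz-tao-set-lem} to pass to a subfamily whose rescaled image genuinely is Katz-Tao, trading the density factor $s$ against the Katz-Tao constant loss $K^2/s$. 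Your sketch omits exactly this step.
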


\medskip

As an $L^2$ method, Theorem \ref{main-thm-intro} is robust and applies equally well to the sister problem of Conjecture \ref{finer-conjecture}: the $L^p$-boundedness of the maximal extension operator.
This problem was mentioned in \cite[(1.17)]{Wu-refined-Strichartz}.
In two dimensions, the Knapp example suggests the following conjecture.
\begin{conjecture}
\label{maximal-extension-conj}
For $q\in(3,4]$ and $2/q+1/p<1$, we have
\begin{equation}
\label{maximal-exten-esti-intro}
    \|E_Sf\|_{L^q_{x_1}L^\infty_{x_2}}\lesssim\|f\|_p.
\end{equation}
\end{conjecture}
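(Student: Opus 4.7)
The plan is to establish Conjecture~\ref{maximal-extension-conj} in the range $q>18/5$, by the same broad-narrow scheme that underlies Theorem~\ref{maximal-schrodinger-thm}. First linearize: choose a measurable $x_2^*:\ZR\to\ZR$ with $\sup_{x_2}|E_Sf(x_1,x_2)|=|E_Sf(x_1,x_2^*(x_1))|$, so $\|E_Sf\|_{L^q_{x_1}L^\infty_{x_2}}^q=\int|E_Sf(x_1,x_2^*(x_1))|^q\,dx_1$. Localizing to $B_R$ and discretizing the graph $\Gamma=\{(x_1,x_2^*(x_1))\}$ at unit scale yields a union of unit balls $X$ whose $R^{-1}$-dilate is automatically a Katz-Tao $(R^{-1},1)$-set, since a graph over $x_1$ contains at most $\sim r$ unit balls in any $r$-ball. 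This is exactly the weight covered by Theorem~\ref{main-thm-intro}. The target becomes $\int_X|E_Sf|^q\lesssim R^{O(\varepsilon)}\|f\|_p^q$, from which Conjecture~\ref{maximal-extension-conj} follows by a standard $\varepsilon$-removal.

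Set $K=R^{\varepsilon^4}$, $A=R^{\varepsilon'}$, and apply the broad-narrow inequality $|E_Sf(x)|\lesssim K\,|\BR_A E_Sf(x)|+K\max_{\tau}|E_Sf_\tau(x)|$ over $K^{-1}$-arcs $\tau$. For the broad piece, Theorem~\ref{main-thm-intro} applied at each level gives $|X_\lambda|\lesssim R^{O(\varepsilon)}\lambda^{-18/5}\|f\|_2^{18/5}$ for $X_\lambda=\{x\in X:|\BR_A E_Sf|\sim\lambda\}$. The constraint $2/q+1/p<1$ together with $q>18/5$ forces $p>9/4>2$, so $L^p(S)\hookrightarrow L^2(S)$ on the compact curve $S$ and $\|f\|_2\lesssim\|f\|_p$. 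Integrating $\lambda^{q-1}\,d\lambda$ against the level-set bound -- which converges at both ends once the upper tail is cut by the trivial bound $|\BR_A E_Sf|\lesssim\|f\|_p$ and the lower tail is handled by the crude $|X_\lambda|\leq|X|\lesssim R$ -- yields the broad contribution $\lesssim R^{O(\varepsilon)}\|f\|_p^q$.

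For the narrow part, parabolic rescaling on each $\tau$ matches $E_Sf_\tau$ at scale $R$ with an extension operator at scale $R/K^2$ applied to a function $g_\tau$ satisfying $\|g_\tau\|_p=K^{1/p}\|f_\tau\|_p$. Summing over $\tau$ using $\ell^p$-orthogonality of the $f_\tau$'s and closing the induction on $R$ uses that the scale gain $(R/K^2)^{O(\varepsilon)}$ beats the $K^C$ loss from broad-narrow once $K=R^{\varepsilon^4}$. The main obstacle is that the mixed norm $L^q_{x_1}L^\infty_{x_2}$ is \emph{not} invariant under the Galilean boost paired with parabolic rescaling (needed to normalize a generic arc $\tau$ centered at $\xi_0\neq 0$): after the boost, the $L^\infty$-sup is taken along a line of slope $\sim K\xi_0$ rather than a vertical line. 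I expect to circumvent this, as in the Schr\"odinger case, by inducting on a strengthened family of mixed norms in which the $L^\infty$ is allowed along any line transverse to the tangent of $S$ at the center of the arc. This family is closed under the rescaling-boost; moreover, the broad-part argument is insensitive to the choice of direction, since Theorem~\ref{main-thm-intro} only uses the Katz-Tao property of the underlying Lipschitz graph $\Gamma$.
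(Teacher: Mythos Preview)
Your approach is essentially the same as the paper's---linearize the maximal operator to obtain a Katz--Tao $(R^{-1},1)$ weight $X$, then run broad--narrow on $\int_X|Ef|^q$---and your broad-part argument is correct (it is exactly Corollary~\ref{main-cor}). The difference lies in how the narrow part is organized, and the ``obstacle'' you flag is self-inflicted.

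The paper does \emph{not} induct on the mixed-norm estimate. Instead it isolates Proposition~\ref{weighted-lp-prop}: the weighted inequality $\|Ef\|_{L^q(X)}\leq C_\e R^\e\|f\|_p$ for \emph{arbitrary} Katz--Tao $(R^{-1},1)$ weights $X$ (not just graphs), with $2/q+1/p=1$. The induction on $R$ is carried out entirely at this level. In the narrow case one rescales $\int_X|Ef_\si|^q$ to $\int_{\cl_\si(X)}|Eg|^q$; the point is then purely combinatorial---after pigeonholing on $|X\cap B_\si|$ over $K\times K^2$ boxes and passing to a subset via Lemma~\ref{katz-tao-set-lem}, a portion of $\cl_\si(X)$ of comparable mass is again a Katz--Tao $((R/K^2)^{-1},1)$-set, and the inductive hypothesis applies directly. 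The Galilean boost is harmless here because the Katz--Tao condition is affine-invariant up to harmless constants; no reference to any ``vertical'' direction survives. Once Proposition~\ref{weighted-lp-prop} is in hand, Theorem~\ref{maximal-exten-thm-intro} follows in one line by your linearization step plus $\e$-removal.

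Your proposed workaround (inducting on a family of slanted mixed norms) would also close, but it is unnecessary: simply promote your inductive hypothesis from ``$X$ is a graph over $x_1$'' to ``$X$ is any Katz--Tao $(R^{-1},1)$-set'', and the rescaling issue evaporates. The only extra work is the check, done in Case~1 of the proof of Proposition~\ref{weighted-lp-prop}, that a suitable subset of the rescaled weight retains the Katz--Tao property.
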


Similar to Conjectures \ref{maximal-schrodinger-conj} and \ref{finer-conjecture}, the bilinear restriction theorem shows that \eqref{maximal-exten-esti-intro} is true when $p=4$.
As an application of Theorem \ref{main-thm-intro}, we have

\begin{theorem}
\label{maximal-exten-thm-intro}
Conjecture \ref{maximal-exten-esti-intro} is true when $q>18/5$.
\end{theorem}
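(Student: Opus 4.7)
The plan is to combine the weighted $L^2$ estimate of Theorem \ref{main-thm-intro} with a standard broad-narrow decomposition for $E_S$, after linearizing the $L^\infty_{x_2}$ norm against a $1$-dimensional Katz-Tao weight. By Littlewood-Paley decomposition and parabolic rescaling, it suffices to prove $\|E_Sf\|_{L^q_{x_1}L^\infty_{x_2}(B_R)}\leq C_\e R^\e\|f\|_p$ for $R\gg 1$ with $S$ a unit arc. First I would choose a measurable selector $t:[-R,R]\to[-R,R]$ satisfying $|E_Sf(x_1,t(x_1))|\geq\tfrac{1}{2}\sup_{x_2\in[-R,R]}|E_Sf(x_1,x_2)|$, so that the left-hand side is comparable to $\|E_Sf\|_{L^q(Y)}$, where $Y=\{(x_1,t(x_1))\}$ is viewed as a union of unit balls, one per unit interval of $x_1$. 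Then $|Y|\sim R$, and any ball of radius $r\in[1,R]$ meets $O(r)$ of these unit balls; hence the $R^{-1}$-dilate of $Y$ is automatically a Katz-Tao $(R^{-1},1)$-set, which is exactly the structure required to feed into Theorem \ref{main-thm-intro}.

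Fix $K=R^{\e^4}$ and $A=R^{\e'}$ as in Theorem \ref{main-thm-intro}. The broad-narrow decomposition yields
\begin{equation*}
    |E_Sf(x)|\leq K^{O(1)}\BR_AE_Sf(x)+\sum_{\si\in\Si_{\mathrm{narrow}}(x)}|E_Sf_\si(x)|,
\end{equation*}
where the narrow sum is over at most $A$ adjacent $K^{-1}$-arcs. The narrow term is handled by parabolic rescaling and induction on scales: each cluster reduces to the same problem at scale $R/K$, with $K^{O(1)}=R^{O(\e^4)}$-losses absorbed by the slack in $K$. For the broad term, I would dyadically pigeonhole the value of $|\BR_AE_Sf|$ on $Y$: for each dyadic $\lambda$ let $Y_\lambda:=\{y\in Y:|\BR_AE_Sf(y)|\sim\lambda\}$, which inherits the Katz-Tao property as a subset of $Y$. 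Applying Theorem \ref{main-thm-intro} gives
\begin{equation*}
    \lambda|Y_\lambda|^{1/2}\lesssim\|\BR_AE_Sf\|_{L^2(Y_\lambda)}\leq R^\e|Y_\lambda|^{2/9}\|f\|_2, \qquad\text{so}\qquad|Y_\lambda|\lesssim R^{O(\e)}\lambda^{-18/5}\|f\|_2^{18/5}.
\end{equation*}
Combining this with the trivial bound $|Y_\lambda|\leq|Y|\lesssim R$ (the two estimates cross at $\lambda_0\sim R^{-5/18}\|f\|_2$) and summing,
\begin{equation*}
    \|\BR_AE_Sf\|_{L^q(Y)}^q=\sum_{\lambda\,\mathrm{dyadic}}\lambda^q|Y_\lambda| \lesssim R^{\max(0,\,1-5q/18)+O(\e)}\|f\|_2^q,
\end{equation*}
which is $\lesssim R^{O(\e)}\|f\|_2^q$ precisely when $q\geq 18/5$, with a genuine gain in $R$ when $q>18/5$.

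Finally, under the hypotheses $q>18/5$ and $2/q+1/p<1$ one has $p>q/(q-2)\geq 2$ throughout the range $(18/5,4]$, so H\"older on the compact arc $S$ gives $\|f\|_2\lesssim\|f\|_p$, completing the broad estimate; the $R^\e$-loss is absorbed via the usual $\e$-removal scheme when closing the induction. The main obstacle is the bookkeeping in the broad-narrow induction: one must arrange that the broad step produces an absolute $R^\e$-loss independent of the induction, while the narrow step contributes only $R^{O(\e^4)}$-losses after rescaling, and one must also verify that the level-set $Y_\lambda$ genuinely inherits the Katz-Tao property needed by Theorem \ref{main-thm-intro} (which it does, being a subset of the graph $Y$). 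Checking that all exponents close cleanly is handled along the same lines as the other applications of Theorem \ref{main-thm-intro} in this paper (cf.\ Theorem \ref{maximal-schrodinger-thm}).
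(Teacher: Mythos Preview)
Your approach is correct and is essentially the same as the paper's: linearize the $L^\infty_{x_2}$ by a measurable selector to obtain a Katz--Tao graph $Y\subset B_R$, then bound $\|E_Sf\|_{L^q(Y)}$ via broad--narrow, using Theorem~\ref{main-thm-intro} on the broad part. Your level-set computation in the broad case is exactly Corollary~\ref{main-cor}. The paper organizes the argument slightly differently: it first isolates the weighted estimate as the standalone Proposition~\ref{weighted-lp-prop} (for \emph{arbitrary} Katz--Tao unions of unit balls, not just graphs), proved by broad--narrow with explicit Katz--Tao bookkeeping under parabolic rescaling, and then Theorem~\ref{maximal-exten-thm-intro} follows in one line by applying it to $Y$. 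That packaging buys modularity --- the same Proposition also feeds into Theorem~\ref{circular-thm} --- while your direct route avoids stating an intermediate proposition.

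One point in your narrow step deserves care. You write that each narrow cap ``reduces to the same problem at scale $R/K$''. Two remarks: the rescaled scale is $R/K^2$, and more importantly, under $\mathcal L_\si$ the graph $Y$ becomes a set that is still Katz--Tao but is \emph{no longer a graph over $x_1$}, so you cannot feed it back into the maximal-operator induction hypothesis directly. There are two clean fixes. Either (i) induct instead on the statement of Proposition~\ref{weighted-lp-prop} for general Katz--Tao weights --- this is what the paper does, and the narrow case then needs the pigeonholing on $|X\cap B_\si|$ plus Lemma~\ref{katz-tao-set-lem} to recover the Katz--Tao constant after rescaling; or (ii) bound $\|E_Sf_\si\|_{L^q(Y)}\le\|E_Sf_\si\|_{L^q_{x_1}L^\infty_{x_2}(B_R)}$ and use the Galilean symmetry of the parabola, $|e^{is\De}g(y+vs)|=|e^{is\De}\tilde g(y)|$ with $|\tilde g|=|g|$, to convert this to the standard mixed norm at scale $R/K^2$ --- this is what the paper does in the Schr\"odinger proof (Proposition~\ref{Schrodinger-prop}). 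Either way the induction closes, since with the $L^2$ right-hand side one gets a factor $K^{1/q-1/2}<1$ per cap; your final passage to $\|f\|_p$ by H\"older on the compact arc is fine.
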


\medskip

\subsubsection{\texorpdfstring{$L^p$}{} variant of the Mizohata–Takeuchi conjecture}

The Mizohata–Takeuchi conjecture asks

\begin{conjecture}
\label{MT-conj}
For a set $X\subset B_R$, define a weight 
\begin{equation}
\label{weight}
    w_R(X)=\sup_{T:\text{ a $1\times R$-tube}}|T\cap X|.
\end{equation}
Then for all $\e>0$,
\begin{equation}
\label{MT-L2}
    \|Ef\|_{L^2(X)}^2\leq C_\e R^\e w_R(X)\|f\|_2^2.
\end{equation}
\end{conjecture}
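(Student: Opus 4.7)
The plan is to attempt Conjecture~\ref{MT-conj} via a broad-narrow decomposition in which Theorem~\ref{main-thm-intro} is applied to the broad contribution after dyadically pigeonholing $X$ by tube density, while the narrow contribution is reduced to the same conjecture at a smaller scale by parabolic rescaling and handled by induction on $R$. Fix $K=R^{\e^2}$ and decompose $f=\sum_{\si}f_\si$ into $K^{-1}$-caps of $S$. Pointwise,
\begin{equation*}
|Ef(x)|^2 \lesssim |\BR_A Ef(x)|^2 + \sum_{\dist(\si_1,\si_2)\le CK^{-1}}|Ef_{\si_1}(x)||Ef_{\si_2}(x)|,
\end{equation*}
so it is enough to handle the two pieces of $\|Ef\|_{L^2(X)}^2$ separately.

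For the narrow term I would apply Cauchy--Schwarz to collapse the double sum to $\sum_\si \|Ef_\si\|_{L^2(X)}^2$, then parabolically rescale each cap $\si$ to the full unit curve. The rescaled weight $\tilde X$ lives in $B_{R/K^2}$, and one must verify that $w_{R/K^2}(\tilde X) \lesssim K^{O(1)}w_R(X)$ so that the inductive hypothesis at the smaller scale returns $\|Ef_\si\|_{L^2(X)}^2 \lesssim (R/K^2)^\e w_R(X)\|f_\si\|_2^2$, summable in $\si$ with the $R^\e$-loss absorbed by the induction constant.

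For the broad term I would partition $X=\bigsqcup_{\mu}X_\mu$ dyadically so that every $1\times R$-tube meets $X_\mu$ in at most $\sim\mu$ unit balls, with $\mu\le w_R(X)$. The $R^{-1}$-dilate of $X_\mu$ is then a Katz--Tao $(R^{-1},1,C\mu)$-set, and Theorem~\ref{main-thm-intro} applied with this explicit constant yields
\begin{equation*}
\|\BR_A Ef\|_{L^2(X_\mu)}^2 \lesssim_\e R^\e |X_\mu|^{4/9}\|f\|_2^2,
\end{equation*}
which I would sum over the $O(\log R)$ values of $\mu$.

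The hard part is that Theorem~\ref{main-thm-intro} delivers $|X|^{4/9}$ whereas Conjecture~\ref{MT-conj} asks for $w_R(X)$, and the ratio $|X|^{4/9}/w_R(X)$ can be as large as $R^{4/9}$ when $X$ is spread across many transverse tubes. Closing this gap within the present framework would require a sharpening of Theorem~\ref{main-thm-intro} whose right-hand side is a hybrid of the form $|X|^\alpha w_R(X)^{1/2-\alpha}$ with explicit dependence on the Katz--Tao constant, together with a matching narrow induction. Without such a strengthening, the scheme above recovers the conjecture only for weights with $w_R(X)\gtrsim |X|^{4/9}$ that additionally satisfy the one-dimensional Katz--Tao hypothesis, and the full $L^2$ Mizohata--Takeuchi conjecture as stated should be regarded as genuinely open within the methods of this paper.
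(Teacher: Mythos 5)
You have been asked to prove a statement that the paper itself presents as an open conjecture: Conjecture~\ref{MT-conj} is the Mizohata--Takeuchi conjecture, and the paper explicitly declines to prove it (``one likely needs to explore the algebraic properties of the weight $X$, which seems quite challenging''), offering instead only the $L^p$ variant Theorem~\ref{MT-thm} for $p\geq 18/5$. So there is no proof in the paper to compare against, and your final verdict --- that the $L^2$ statement remains open within these methods --- is the correct one. Your diagnosis of the obstruction is also essentially the right one, and it is exactly why the paper stops at $p\geq 18/5$: Proposition~\ref{main-prop} gives $\|\BR_A Ef\|_{L^2(X)}^2\lesssim |X|^{4/9}\|f\|_2^2$, and at exponent $q$ H\"older yields a factor $|X|^{2/q-1}|X|^{4/9}$, which is bounded precisely when $1-2/q\geq 4/9$, i.e.\ $q\geq 18/5$; at $q=2$ the factor $|X|^{4/9}$ survives and cannot in general be dominated by $w_R(X)$, since a spread-out Katz--Tao $(R^{-1},1)$-set can have $|X|\sim R$ while $w_R(X)=O(1)$.

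Two smaller points where your sketch diverges from what would actually be needed. First, your broad step applies Theorem~\ref{main-thm-intro} to a set that is only a Katz--Tao $(R^{-1},1,C\mu)$-set; the theorem as stated requires constant $O(1)$, and the paper's mechanism (in the proof of Theorem~\ref{MT-thm}) is instead to invoke Lemma~\ref{katz-tao-set-lem} to extract a genuine $(R^{-1},1)$-subset $X'$ with $|X|\lessapprox\gamma|X'|$ and to observe that the concentration constant $\gamma$ is itself bounded by the tube weight, $\gamma\lesssim w_R'(X)$ --- this is cleaner than tracking an explicit constant through the induction, and also note that your ``partition $X$ by tube density'' is circular as stated, since the tube count of $X_\mu$ depends on the partition. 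Second, in the narrow step the unrestricted weight $w_R$ does not rescale well under $\cl_\si$; the paper works with the direction-restricted weight $w_R'$ of \eqref{weight-1} precisely so that $w'_{R/K^2}(\cl_\si(X))\leq K^{-2}w_R'(X)$, which is what lets the induction on scales close. Neither fix, however, touches the real gap you identified, and the conjecture should indeed be regarded as open.
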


In order to answer Conjecture \ref{MT-conj}, one likely needs to explore the algebraic properties of the weight $X$, which seems quite challenging.
However, this difficulty disappears when the $L^2$ norm $\|Ef\|_{L^2(X)}$ is replaced by a smaller quantity $\|Ef\|_{L^p(X)}$ for some $p>2$.
The case $p=4$ for \eqref{MT-Lp} was considered by Carbery, Iliopoulou, and Shayya in \cite{Oberwolfach-report}.

As an application of Theorem \ref{main-thm-intro}, we show the following $L^p$ variant of \eqref{MT-L2}, extending the result by Carbery, Iliopoulou, and Shayya.

\begin{theorem}
\label{MT-thm}
For all $\e>0$ and $p\geq 18/5$,
\begin{equation}
\label{MT-Lp}
    \|Ef\|_{L^p(X)}^p\leq C_\e R^\e w_R(X)\|f\|_2^p.
\end{equation}
\end{theorem}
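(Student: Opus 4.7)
The first step is to reduce Theorem~\ref{MT-thm} to the endpoint $p=18/5$: since $\|Ef\|_\infty\leq\|f\|_1\lesssim\|f\|_2$ and $w_R(X)\geq 1$, interpolating the endpoint bound at $p=18/5$ against the trivial $L^\infty$ estimate yields \eqref{MT-Lp} for every $p\geq 18/5$. A standard dyadic pigeonholing on $|Ef|$ further reduces matters to proving, for each dyadic level $\lambda$ and $X_\lambda:=\{x\in X:\,|Ef(x)|\sim\lambda\}$,
\begin{equation}
\lambda^{18/5}|X_\lambda|\leq C_\e R^\e\, w_R(X)\|f\|_2^{18/5}.
\label{MT-level-aim}
\end{equation}
Next, I would perform a Bourgain--Guth broad-narrow decomposition at scale $K=R^{\e^4}$. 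The narrow piece---where a single $K^{-1}$-arc $\tau^\ast$ dominates $Ef$ on $X_\lambda$---is handled by parabolic rescaling of $\tau^\ast$ to the unit arc, converting the local problem into a geometrically similar MT-type estimate at the smaller scale $R/K^2$ whose tube weight is comparable to $w_R(X)$. An induction on $R$ together with summation over the $O(K)$ choices of $\tau^\ast$ closes this piece.

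For the broad contribution, the plan is to invoke Theorem~\ref{main-thm-intro}. Although $X$ itself need not be a Katz--Tao set, the tube hypothesis $|T\cap X|\leq w_R(X)=:w$ implies the ball estimate
\begin{equation}
|X\cap B(y,r)|\leq Cw r, \qquad r\in[1,R],
\end{equation}
obtained by covering $B(y,r)$ with $O(r)$ parallel $1\times R$-tubes. In particular, the $R^{-1}$-dilate of $X_\lambda$ is a Katz-Tao $(R^{-1},1,Cw)$-set. A uniform random coloring of the unit cubes of $X_\lambda$ with $Cw$ colors, combined with Chernoff concentration and a union bound over dyadic scales, decomposes $X_\lambda=\bigsqcup_{i=1}^{Cw}X_{\lambda,i}$ into honest Katz-Tao $(R^{-1},1)$-pieces with an $R^{o(1)}$ absolute constant. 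Applying Theorem~\ref{main-thm-intro} to each piece, summing in $i$, and using the concavity of $t\mapsto t^{4/9}$ gives
\begin{equation}
\|\BR_A Ef\|_{L^2(X_\lambda)}^{2}\leq C_\e R^\e\, w^{5/9}|X_\lambda|^{4/9}\|f\|_2^2.
\end{equation}
Since $\lambda^2|X_\lambda|\lesssim\|\BR_A Ef\|_{L^2(X_\lambda)}^2$ on the broad portion (after a further pigeonholing making the broad part comparable to $X_\lambda$), we obtain $\lambda^2\leq C_\e R^\e w^{5/9}|X_\lambda|^{-5/9}\|f\|_2^2$. Raising to the $9/5$-power and multiplying by $|X_\lambda|$ yields exactly \eqref{MT-level-aim}, since $(5/9)\cdot(9/5)=1$ and $(-5/9)\cdot(9/5)+1=0$.

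The principal difficulty I expect is the narrow-part induction: one has to verify that the MT weight of the parabolically rescaled set remains bounded by $w_R(X)$ under the anisotropic transformation, and that the $K^{O(1)}$ losses accumulated from pigeonholing over dominant arcs are absorbed by the inductive gain $K^{-2\e^5}$ at the smaller scale $R/K^2$. The decomposition step into $Cw$ Katz-Tao pieces also requires some care at the smallest dyadic scales, where Chernoff is weak; there one simply accepts an $R^{o(1)}$ loss in the Katz-Tao constant and absorbs it into $C_\e R^\e$.
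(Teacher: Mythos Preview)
Your strategy—broad-narrow, parabolic rescaling with induction on $R$ for the narrow part, Katz-Tao reduction plus Theorem~\ref{main-thm-intro} for the broad part—is the same as the paper's. The broad piece is implemented slightly differently: rather than partitioning into $Cw$ Katz-Tao pieces by random coloring and summing via concavity, the paper pigeonholes so that $\|\BR_A Ef\|_{L^p(B)}$ is essentially constant across the relevant unit balls, passes to a \emph{single} Katz-Tao subset via Lemma~\ref{katz-tao-set-lem} (losing only a factor equal to the Katz-Tao constant $\gamma\lesssim w_R(X)$, since a $\gamma$-dense $r$-ball already forces a tube of mass $\gtrsim\gamma$), and applies Corollary~\ref{main-cor}, whose right-hand side carries no $|X|$. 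Both routes arrive at the same bound; the paper's avoids the Chernoff bookkeeping.

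There is, however, a genuine issue in your narrow-part accounting. The rescaled weight is not merely ``comparable'' to $w_R(X)$: the crucial fact is $w_{R/K^2}(\cl_\tau(X))\lesssim K^{-2}w_R(X)$, because $\cl_\tau^{-1}$ sends a $1\times R/K^2$-tube (in the directions relevant to $E$) to a $K\times R$-tube, coverable by $O(K)$ unit-width tubes; the paper introduces the near-vertical variant $w_R'$ precisely to make this uniform. Combined with the Jacobian $K^{3-p}$ and $\|g\|_2^p=K^{p/2}\|f_\tau\|_2^p$, the narrow term gains a factor $K^{1-p/2-2\e}=K^{-4/5-2\e}$ at $p=18/5$, while the only loss is $A^{p}=K^{O(\e^2)}$ (the sum $\sum_\tau\|f_\tau\|_2^p\le\|f\|_2^p$ costs nothing for $p\ge2$, so there is no $K^{O(1)}$ loss from ``summation over the $O(K)$ choices of $\tau^\ast$''). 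Your stated balance ``$K^{O(1)}$ losses absorbed by $K^{-2\e^5}$'' would \emph{not} close; it is the structural $K^{-4/5}$ gain coming from the weight, not the $\e$-power gain, that makes the induction work.
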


\medskip 

\begin{acknowledgement}
\rm
The author is grateful to Xiumin Du for helpful discussions.
\end{acknowledgement}
    
\medskip

\noindent {\bf Notation:} 
Throughout the paper, we use $\# E$ to denote the cardinality of a finite set.
For $A,B\geq 0$, we use $A\lesssim B$ to mean $A\leq CB$ for an absolute constant (independent of scales) $C$, and use $A\sim B$ to mean $A\lesssim B$ and $B\lesssim A$.
For a given $\de<1$, we use $ A \lessapprox B$ to denote $A\leq c_{\upsilon}\de^{-\upsilon} B$ for all $\upsilon>0$ (same notation applies to a given $R>1$ by taking $\de=R^{-1}$). 
We use $B_R$ to denote a ball of radius $R$ in $\ZR^3$.

\bigskip

\section{Proof of Theorem \ref{main-thm-intro}}

\subsection{Preliminary reductions}
By standard reductions, we may assume the curve $S$ is the graph of a function $\Phi:[-1,1]\to\ZR$ with $|\Phi''|\sim1$.
Let
\begin{equation}
\label{extension-reduced}
    Ef(x_1,x_2)=\int_{[-1,1]}e^{i(x_1\xi+x_2\Phi(\xi))}f(\xi)d\xi.
\end{equation}
Similar to Definition \ref{def-broad-function-intro}, we consider the following definition.

\begin{definition}
\label{def-broad-function}
Let $K\geq 1$ and $A\in\ZN$.
Let $\Si=\{\si\}$ be a set of finite-overlapping $K^{-1}$-intervals that covers $[-1,1]$. 
Let $f_\si=f\Id_\si$.
For any $f:[-1,1]\to\ZC$ and  $x\in \ZR^2$, define 
\begin{equation}
    \BR_A Ef(x):=\max_{\substack{\Si'\subset\Si,\\ \#\Si'=A}}\min_{\si\in\Si'}|Ef_\si(x)|.
\end{equation}
\end{definition}

Standard arguments reduce Theorem \ref{main-thm-intro} to the following result.

\begin{proposition}

\label{main-prop}

Let $R\geq 1$ and $\e\in(0,10^{-3})$.
Let $K\leq R^{\e^4}$, $\e'\in(0,\log K/\log R)$, and let $\Si=\{\si\}$ be a collection of finitely overlapping $K^{-1}$-caps. 
Let $X\subset B_R$ be a union of unit balls such that the $R^{-1}$-dilate of $X$ is a Katz-Tao $(R^{-1},1)$-set.
Then, if $A\geq R^{\e'}$, we have 
\begin{equation}
\label{main-esti}
    \|\BR_A Ef\|_{L^2(X)}^2\leq C_{\e,\e'} R^{2\e}|X|^{4/9}\|f\|_2^2.
\end{equation}

\end{proposition}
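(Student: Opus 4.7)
The plan is to prove Proposition~\ref{main-prop} by induction on the scale $R$, combining the broad-narrow framework with the refined decoupling theorem and the two-ends Furstenberg estimate of~\cite{Wang-Wu}. The base case $R\lesssim 1$ is trivial, so we focus on the inductive step.

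After dyadic pigeonholing on the superlevel set $\{x:\BR_A Ef(x)\sim \lambda\}$, the squared $L^2$-norm to be controlled reduces to $\lambda^2|X_\lambda|$ for a subset $X_\lambda\subset X$ whose $R^{-1}$-dilate is still a Katz-Tao $(R^{-1},1)$-set (with a slightly larger constant). Fixing one cap $\sigma\in\Sigma$ among the $A$ caps supplied by the broad condition, I would perform a wave-packet decomposition of $Ef_\sigma$ at scale $R^{-1/2}$ and, after another round of pigeonholing, reduce to a uniform situation where at every $x\in X_\lambda$ a sub-family of $R^{1/2}\times R$-tubes of cardinality $\sim N_\sigma$, each carrying roughly equal amplitude, contributes to $|Ef_\sigma(x)|\gtrsim\lambda$.

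Next, I would carry out the standard two-ends reduction: after losing an $R^\e$ factor one may assume that no small ball around each $x\in X_\lambda$ absorbs more than a tiny fraction of the mass of any tube through $x$. In this regime, the broad condition supplies $A$ angularly separated such families of tubes through each point of $X_\lambda$, and the two-ends Furstenberg estimate of~\cite{Wang-Wu} controls $|X_\lambda|$ by an incidence-type count involving $N_\sigma$, the Katz-Tao constant of $X$, and the parameters $A$ and $K$. This incidence bound is the key new input beyond the bilinear restriction estimate~\eqref{bilinear-weighted-L2}.

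Finally, I would apply the refined decoupling theorem at scale $R^{1/2}$ on the relevant union of $R^{1/2}$-balls of $X_\lambda$, which converts $\|Ef_\sigma\|_{L^p}$ on this union into an $\ell^2$-sum of packet $L^p$-norms; parabolically rescaling each $\sigma$-piece to an extension problem at scale $R^{1/2}$ on a Katz-Tao $(R^{-1/2},1)$-set, the inductive hypothesis applies. Combining the Furstenberg bound on $|X_\lambda|$, the decoupling estimate, the rescaled inductive estimate, and optimizing in $\lambda$, the two-ends scale, and $N_\sigma$ yields a self-similar recursion whose fixed exponent matches $|X|^{4/9}$ in~\eqref{main-esti}. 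The principal obstacle will be the two-ends step: one must match the $A$-broad, one-dimensional Katz-Tao configuration to the hypotheses of~\cite{Wang-Wu} and extract an incidence count tight enough to reach $4/9$, improving on the exponent $1/2$ produced by the trivial bilinear restriction bound (after squaring the $|X|^{1/4}$ of~\eqref{bilinear-weighted-L2}). The accompanying book-keeping --- ensuring that the $K^{O(1)}$-losses from the broad-narrow decomposition are absorbed into $R^\e$ via the choice $K\le R^{\e^4}$, and that the transversality quantified by $A\ge R^{\e'}$ with $\e'<\log K/\log R$ is genuinely exploited --- is the other delicate technical point.
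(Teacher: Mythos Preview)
Your proposal assembles the correct ingredients---wave-packet pigeonholing, a two-ends reduction, refined decoupling, the two-ends Furstenberg estimate of~\cite{Wang-Wu}, and induction on scales---but the architecture is off in a way that blocks the exponent $4/9$. The paper does \emph{not} run a single self-similar recursion. Instead it derives two independent bounds on $\|\BR_{A_1}Ef_1\|_{L^2(X_1)}^2$ and interpolates. The first (``Method~1'') uses no induction at all: after uniformizing the tube multiplicity $M$ through each $r^{1/2}$-ball, trivial $L^2$-orthogonality plus Corollary~\ref{two-ends-furstenberg-cor} gives $K^{O(1)}M^{1/2}r^{1/4}\|f\|_2^2$, while refined decoupling (Theorem~\ref{refined-decoupling-thm}) plus the \emph{same} Furstenberg incidence bound gives $K^{O(1)}M^{-1/3}r^{1/2}\|f\|_2^2$; eliminating $M$ yields the $|X|$-free estimate $K^{O(1)}r^{2/5}\|f\|_2^2$. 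The second (``Method~2'') carries the induction, but \emph{spatially} rather than via parabolic rescaling: one restricts to $r^{1-\e^2}$-balls $B_k$ and, in the non-two-ends scenario, applies the inductive hypothesis directly to $\BR_{A_2}E(f_1)_k$ on $X\cap B_k$; in the two-ends scenario a single-bush count (not Furstenberg) gives the $|X|$-linear bound $r^{O(\e^2)}r^{-1/2}|X_1|\,\|f\|_2^2$. The exponent $4/9$ then appears as $(r^{2/5})^{5/9}(r^{-1/2}|X_1|)^{4/9}=|X_1|^{4/9}$.

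Your step ``parabolically rescale each $\sigma$-piece and apply the inductive hypothesis'' cannot close: after rescaling a single $K^{-1}$-cap the resulting function carries no broad structure, so the hypothesis on $\BR_A E$ is simply unavailable, and no surrogate estimate at the smaller scale recovers the needed gain. More fundamentally, your scheme never produces an estimate \emph{linear} in $|X|$; without such a bound there is nothing to interpolate against the $|X|$-free decoupling/Furstenberg estimate, and no optimization over $\lambda$ or $N_\sigma$ will manufacture a fractional power of $|X|$. The two pieces you are missing are (i) the spatial (not cap-wise) form of the induction, which preserves broadness on smaller balls, and (ii) the elementary bush argument in the two-ends branch of Method~2, which supplies the second estimate $r^{-1/2}|X|\|f\|_2^2$.
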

\smallskip

The next lemma is standard.
See \cite{Guth-R3}, for example.
\begin{lemma}
If $A=A_1+A_2$ and $f=f_1+f_2$, then
\begin{equation}
\label{broad-triangle}
    \BR_A Ef\leq \BR_{A_1} Ef_1+\BR_{A_2}Ef_2.
\end{equation}
\end{lemma}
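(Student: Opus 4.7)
The plan is to prove this pointwise for every $x\in\ZR^2$, so it reduces to a combinatorial statement about finite sequences of nonnegative numbers. The key reformulation I would use is that $\BR_A Ef(x)$ is nothing other than the $A$-th largest value in the multiset $\{|Ef_\si(x)|\}_{\si\in\Si}$ (ordered in decreasing order), since the inner minimum on any $A$-element subset $\Si'$ is maximized by taking $\Si'$ to be the top $A$ caps. The analogous statement holds for $\BR_{A_j} Ef_j(x)$ and the families $\{|Ef_{j,\si}(x)|\}_{\si\in\Si}$, $j=1,2$.

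First, I would fix $x$ and pick an $A$-element set $\Si'\subset\Si$ realizing the maximum in the definition of $\BR_A Ef(x)$. By linearity of the extension operator and the triangle inequality, for every cap $\si$ one has $|Ef_\si(x)|\leq |Ef_{1,\si}(x)|+|Ef_{2,\si}(x)|$, so it suffices to locate a single cap $\si^*\in\Si'$ for which both summands on the right are controlled by the corresponding broad values at $x$.

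This last step is a simple pigeonhole. Define
\begin{equation*}
B_j=\bigl\{\si\in\Si : |Ef_{j,\si}(x)|>\BR_{A_j}Ef_j(x)\bigr\}, \qquad j=1,2.
\end{equation*}
The characterization of $\BR_{A_j}Ef_j(x)$ as the $A_j$-th largest value of $|Ef_{j,\si}(x)|$ forces $\#B_j\leq A_j-1$, hence $\#(B_1\cup B_2)\leq A-2<A=\#\Si'$. So one can pick $\si^*\in\Si'\setminus(B_1\cup B_2)$, and then
\begin{equation*}
\BR_A Ef(x)=\min_{\si\in\Si'}|Ef_\si(x)|\leq |Ef_{\si^*}(x)|\leq \BR_{A_1}Ef_1(x)+\BR_{A_2}Ef_2(x),
\end{equation*}
which is the desired inequality. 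I do not anticipate any genuine obstacle here; the only thing to be careful about is the strict-versus-non-strict inequality in the definition of $B_j$, which is exactly what makes the counting $\#B_j\leq A_j-1$ (rather than $\leq A_j$) work and leaves room for a surviving cap in $\Si'$.
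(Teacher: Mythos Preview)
Your argument is correct and is exactly the standard pigeonhole proof of this inequality. The paper itself does not supply a proof, simply citing \cite{Guth-R3} for this standard lemma; your write-up matches the argument found there.
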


\medskip

\subsection{Wave packet decomposition and lemmas}

We first construct a wave packet decomposition and state some of its key properties for later use.
The wave packet decomposition is quite standard nowadays.

\smallskip

Given the $\e$ in Theorem \ref{main-thm-intro}, we fix a tiny constant 
\begin{equation}
\e_0=\e^{1000}.
\end{equation}
In the frequency space, let $\Theta$ be a finite-overlapping cover of $[-1,1]$ by $R^{-1/2}$-intervals $\theta$, and let $\{\vp_\theta\}_{\theta\in \Theta}$ be a smooth partition of unity so that $\supp(\vp_\theta)\subset 2\theta$ and $\sum_{\theta\in\Theta} \vp_\theta=1$ on $[-1,1]$. 
For $f:[-1,1]\to\ZC$, let 
\begin{equation}
    f_\theta=f\vp_\theta.
\end{equation}
In the physical space, let $\cv$ be a finite-overlapping partition of $\ZR$ by $R^{1/2}$-intervals, and let $\{\psi_v\}_{v\in \cv}$ be a smooth partition of unity of $\ZR$ so that $\psi_v$ is concentrated near $v$, $\supp(\wh\psi_v)\subset [-R^{-1/2}, R^{-1/2}]$ and $\sum_{v\in\cv}\psi_v=1$ in $\ZR$.

\smallskip

The above frequency-space partition gives a wave packet decomposition for any function $f$ supported on $[-1,1]$
\begin{equation}
	\nonumber
	f=\sum_{\theta\in \Theta}\sum_{v\in\cv}(f\vp_\theta)\ast\hat\psi_v=:\sum_{(\theta,v)\in\Theta\times\cv}f_{\theta,v}.
\end{equation}
For each $\theta\in \Theta$ and each $v\in\cv$, let $T_{\theta,v}=\{(x_1,x_2)\in B_R:|x_1-c_v+x_2\nabla\Phi(c_\theta)|\leq R^{1/2+\e_0} \}$ be a tube of dimensions $R^{1/2+\e_0}\times R$, where $c_\theta,c_v$ are the centers of $\theta,v$ respectively. 
Denote by $V(\theta)$ the vector $(1, \nabla\Phi(c_\theta))$.
Let $\bar\ZT(\theta)=\{T_{\theta,v}:v\in\cv\text{ and }T_{\theta,v}\cap B_R\not=\varnothing\}$ be a family of $R$-tubes with direction $V(\theta)$, and let $\bar\ZT=\bigcup_\theta\bar\ZT(\theta)$. 
If $T=T_{\theta,v}$, we write 
\begin{equation}
\label{jiojgiotgiutu8hu8y}
f_T=f_{\theta,v}, \;\theta=\theta_T.
\end{equation}

\begin{lemma}
	\label{wpt}
	The wave packet decomposition satisfies the following properties.
	\begin{enumerate}\item $Ef=\sum_{T\in\bar\ZT}Ef_{T}$.
		\item $|Ef_{T}(x)|\lesssim R^{-1000}$ when $x\in B_R\setminus T$.
		\item $\supp f_{T}\subset 3\theta$ when $T$ has direction $V(\theta)$.
		\item $\{V(\theta)\}_{\theta\in\Theta}$ are $\gtrsim R^{-1/2}$-separated.
	\end{enumerate}    
\end{lemma}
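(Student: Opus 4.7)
The lemma is essentially an exercise in the construction of the wave packets; I handle the four claims in the order (1), (3), (4), (2), with (2) being the only nontrivial step.

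For (1), use $\sum_\theta\vp_\theta\equiv 1$ on $[-1,1]$ together with the Fourier-side identity $\sum_v\wh\psi_v=\delta_0$ (the Fourier transform of $\sum_v\psi_v\equiv 1$) to get
\begin{equation*}
\sum_T f_T \;=\; \sum_\theta (f\vp_\theta)\ast\sum_v \wh\psi_v \;=\; \sum_\theta f\vp_\theta \;=\; f,
\end{equation*}
then apply $E$. For (3), since $f_{\theta,v}=(f\vp_\theta)\ast\wh\psi_v$, its support lies in $\supp(\vp_\theta)+\supp(\wh\psi_v)\subset 2\theta+[-R^{-1/2},R^{-1/2}]\subset 3\theta$ (after a mild shrinking of one of the bumps if needed). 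For (4), distinct centers $c_\theta,c_{\theta'}$ are $\gtrsim R^{-1/2}$-separated, and then $|V(\theta)-V(\theta')|\geq|\Phi'(c_\theta)-\Phi'(c_{\theta'})|\gtrsim R^{-1/2}$ by the mean value theorem and the hypothesis $|\Phi''|\sim 1$.

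For (2), the plan is to write $Ef_T(x)$ as an iterated integral and extract an $R^{1/2}$-scale bump. Substituting $\xi=\eta+\zeta$ with $\eta\in\supp\vp_\theta$, $\zeta\in\supp\wh\psi_v$, and Taylor-expanding $\Phi(\eta+\zeta)=\Phi(\eta)+\Phi'(\eta)\zeta+\tfrac{1}{2}\Phi''(\eta)\zeta^2+O(\zeta^3)$, I get
\begin{equation*}
Ef_{\theta,v}(x)=\int f(\eta)\vp_\theta(\eta)\,e^{i(x_1\eta+x_2\Phi(\eta))}\,I(\eta,x)\,d\eta,
\end{equation*}
\begin{equation*}
I(\eta,x)=\int e^{i\zeta(x_1+x_2\Phi'(\eta))}\,e^{i\frac{x_2}{2}\Phi''(\eta)\zeta^2+O(x_2|\zeta|^3)}\,\wh\psi_v(\zeta)\,d\zeta.
\end{equation*}
On $|\zeta|\leq R^{-1/2}$ and for $x\in B_R$ (so $x_2\leq R$), the cubic error is $O(R^{-1/2})$ and expands out of the exponential harmlessly, while the quadratic chirp has $\zeta$-derivative $\lesssim R^{1/2}$ and its effect is therefore only to mollify $\psi_v$ on scale $R^{1/2}$. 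Consequently $I(\eta,x)$ is essentially the Schwartz bump $\psi_v\bigl(x_1+x_2\Phi'(\eta)\bigr)$ at scale $R^{1/2}$ (note that $\psi_v$ is Schwartz by Paley--Wiener since $\wh\psi_v$ is smooth and compactly supported). Since $|\Phi'(\eta)-\Phi'(c_\theta)|\lesssim R^{-1/2}$ shifts the center of the bump by at most $R^{1/2}$, the tube complement condition $|x_1-c_v+x_2\Phi'(c_\theta)|>R^{1/2+\e_0}$ forces $|I(\eta,x)|\leq R^{-1000}$ by the super-polynomial decay of $\psi_v$, and combining with $\|f\vp_\theta\|_1\lesssim 1$ gives (2).

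The only delicate point is checking that the quadratic-chirp correction together with the slow variation of $\Phi'$ over $\theta$ only smear $\psi_v$ on scale $R^{1/2}$, which is then harmlessly absorbed by the $R^{\e_0}$ margin built into the tube dimensions; everything else is routine bookkeeping.
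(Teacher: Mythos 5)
The paper states this lemma without proof as a standard fact, and your argument is precisely the standard one (support/partition-of-unity bookkeeping for (1), (3), (4), and non-stationary phase off the tube for (2)), so it is correct and consistent with what the paper invokes. The one step to phrase more carefully is the cubic term in (2): a first-order expansion of $e^{iO(x_2|\zeta|^3)}$ out of the exponential leaves an $O(R^{-1/2})$ error, far larger than $R^{-1000}$, so the cubic (like the quadratic chirp) must be kept in the amplitude and absorbed into the repeated integration by parts, which works because each $\zeta$-derivative of $e^{ix_2(\Phi(\eta+\zeta)-\Phi(\eta)-\Phi'(\eta)\zeta)}$ costs only $O(R^{1/2})$ on $|\zeta|\leq R^{-1/2}$, exactly the loss your $R^{\e_0}$ margin tolerates.
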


\medskip
What follows are some Fourier analytic and combinatorial results, along with some notation in the study of incidence geometry.
The decoupling theorem stated below is a powerful refinement of the celebrated $\ell^2$ decoupling inequality established in \cite{Bourgain-Demeter-l2}.
The concept of decoupling inequalities was introduced by Wolff \cite{Wolff-decoupling}.

\begin{theorem}[\cite{GIOW} Theorem 4.2]
\label{refined-decoupling-thm}
Suppose $f:[-1,1]\to\ZC$ is a sum of wave packets $f=\sum_{T\in\ZT}f_T$ so that $\|f_T\|_2$ are the same up to a constant multiple for all $T\in\ZT$. 
Let $X$ be a union of $R^{1/2}$-balls in $B_R$ such that each $R^{1/2}$-ball $Q\subset X$ intersects to at most $M$ tubes from $\ZT$. 
Then 
\begin{equation}
\label{refined-decoupling}
    \|Ef\|_{L^6(X)}\lessapprox  M^{1/3}\Big(\sum_{T\in\ZT}\|Ef_T\|_{L^6(w_{B_R})}^6\Big)^{1/6}.
\end{equation}
Here $w_{B_R}$ is a weight that is $\sim1$ on $B_R$ and decreases rapidly outside $B_R$. 
\end{theorem}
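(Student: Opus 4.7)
My plan is to obtain this refined decoupling estimate by combining the classical Bourgain--Demeter $\ell^2$ decoupling inequality, applied locally on each $R^{1/2}$-ball $Q\subset X$, with a H\"older step that exploits the tube-concentration hypothesis. The $M^{1/3}$ gain will emerge as an $\ell^2\to\ell^6$ conversion across the at most $M$ caps $\theta$ whose wave packets actually meet a given ball $Q$.

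First, for each $R^{1/2}$-ball $Q\subset X$ I would apply $\ell^2$ decoupling at scale $R^{1/2}$ to obtain
$$\|Ef\|_{L^6(Q)}^2 \lessapprox \sum_{\theta\in\Theta}\|Ef_\theta\|_{L^6(w_Q)}^2.$$
Next I would invoke the rapid decay of wave packets outside their tubes (Lemma~\ref{wpt}(2)) to conclude that $Ef_\theta$ contributes non-negligibly on $Q$ only when some $T\in\bar\ZT(\theta)\cap\ZT$ passes through $Q$. By hypothesis at most $M$ tubes from $\ZT$ meet $Q$, so at most $M$ caps $\theta$ contribute; contributions from the remaining $\theta$ are Schwartz-negligible and can be absorbed into the $R^{\e_0}$ slack of $\lessapprox$. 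H\"older with exponents $3$ and $3/2$ on the effective set of size $\leq M$ then yields
$$\|Ef\|_{L^6(Q)}^6 \lessapprox M^2\sum_{\theta\in\Theta}\|Ef_\theta\|_{L^6(w_Q)}^6.$$

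Then I would sum this bound over $Q\subset X$ and convert the inner $\sum_\theta$ into $\sum_T$ via the $L^6$-quasi-disjointness of same-direction wave packets: within each $\theta$-slab the tubes with $\theta_T=\theta$ are spatially separated in the transverse direction, which gives $\|Ef_\theta\|_{L^6(w_Q)}^6 \lessapprox \sum_{T\colon\theta_T=\theta}\|Ef_T\|_{L^6(w_Q)}^6$. Swapping the order of summation and using $\sum_Q w_Q\lessapprox w_{B_R}$ then produces $\|Ef\|_{L^6(X)}^6 \lessapprox M^2\sum_T\|Ef_T\|_{L^6(w_{B_R})}^6$, which upon taking sixth roots gives \eqref{refined-decoupling}.

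The main obstacle, I expect, is the careful bookkeeping needed to extract exactly the $M^{1/3}$ factor without spoilage. In particular, the Schwartz tails from non-contributing wave packets must be controlled globally across all $Q$'s, and the $L^6$-quasi-disjointness for same-direction tubes --- which substitutes for the failure of $L^6$ to be a Hilbert space --- must be proved without accumulating losses from the many scales. The hypothesis that the $\|f_T\|_2$ are comparable is most likely deployed here, to uniformly normalize these wave packet estimates and to justify the single application of H\"older on the effective $\theta$-set (rather than weighted versions that would introduce additional parameters). Threading the $R^{\e_0}$-losses through the $\lessapprox$ notation without pulling out any polynomial factor of $R$ is the crux of the argument.
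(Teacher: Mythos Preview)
The paper does not prove this theorem; it is quoted from \cite{GIOW} (Theorem~4.2 there) and used as a black box, so there is no in-paper argument to compare against. That said, your sketch has a genuine gap.

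Your Step~1 asserts
\[
\|Ef\|_{L^6(Q)}^2 \lessapprox \sum_{\theta\in\Theta}\|Ef_\theta\|_{L^6(w_Q)}^2
\]
on an $R^{1/2}$-ball $Q$, with $\theta$ ranging over $R^{-1/2}$-caps. But Bourgain--Demeter decoupling applied at the scale of $Q$ only yields $|Q|^{-1/2}=R^{-1/4}$-caps; reaching $R^{-1/2}$-caps requires the full $R$-ball on the right, which would destroy the localization you need. In fact the displayed inequality is false. Take one wave packet per direction, all passing through a common point $x_0\in Q$, normalized so that $\|f_T\|_2=1$. Then $|Ef_\theta|\sim R^{-1/4}$ on $Q$, so the right-hand side is $\sim R^{1/2}\cdot R^{-1/2}\cdot|Q|^{1/3}=R^{1/3}$, while $|Ef(x_0)|\sim R^{1/4}$ on a unit neighborhood gives $\|Ef\|_{L^6(Q)}^2\gtrsim R^{1/2}$. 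The inequality fails by $R^{1/6}$, and after cubing and applying your H\"older step, the combined bound $\|Ef\|_{L^6(Q)}^6\lessapprox M^2\sum_\theta\|Ef_\theta\|_{L^6(w_Q)}^6$ fails by $R^{1/2}$ in this same example. The later replacement of $w_Q$ by $w_{B_R}$ does enlarge the right-hand side enough to make the \emph{final} inequality true here, but that does not rescue the argument: the intermediate step is simply false.

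The proof in \cite{GIOW} proceeds instead by induction on scales. One decouples from $\delta$-caps to $\delta^{1/2}$-caps on $\delta^{-1}$-balls (which \emph{is} what Bourgain--Demeter gives at that scale), applies H\"older using the multiplicity bound available at that intermediate scale, and then iterates through $\sim\log\log R$ scales to reach $R^{-1/2}$. The hypothesis that the $\|f_T\|_2$ are comparable is used to propagate the multiplicity parameter $M$ consistently through the iteration. Your single-scale application of decoupling collapses precisely this multiscale structure.
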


\begin{lemma}
\label{katz-tao-constant-lem}
Let $X\subset B_R$ be a union of unit balls such that the $R^{-1}$-dilate of $X$ is a Katz-Tao $(R^{-1},1)$-set.
Suppose $\cq$ is a family of $R^{1/2}$-balls that form a cover of $X$ such that $|X\cap Q|\sim R^{\al_2}$ for all $Q\in\cq$.
Then for all $r\in[R^{1/2},R]$,
\begin{equation}
    \sup_{B_r}\frac{\#\{Q\in\cq: Q\subset B_r\}}{r/R^{1/2}}\lesssim R^{1/2-\al_2}.
\end{equation}
\end{lemma}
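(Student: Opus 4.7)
My plan is a direct volume-counting argument. First I would rewrite the Katz-Tao hypothesis at the original scale: since $R^{-1}X$ is a Katz-Tao $(R^{-1},1)$-set and consists of $R^{-1}$-balls, dilating by $R$ shows that for every ball $B_\rho \subset \ZR^2$ with $\rho\in[1,R]$,
$$|X\cap B_\rho|\lesssim \rho,$$
where we identify $|X\cap B_\rho|$ (up to an absolute constant) with the number of unit balls of $X$ meeting $B_\rho$. The range $r\in[R^{1/2},R]$ in the lemma lies inside $[1,R]$, which is exactly the range where this rescaled nonconcentration bound is available.

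Fix $r\in[R^{1/2},R]$ and a ball $B_r$, and set $\cq' = \{Q\in\cq : Q\subset B_r\}$. Since nothing in the hypotheses forces $\cq$ to have bounded overlap, I would first pass, by a standard Vitali-type thinning, to a subfamily $\cq''\subset \cq'$ of size $\#\cq''\gtrsim \#\cq'$ whose centers are $\gtrsim R^{1/2}$-separated; concretely, take a maximal $R^{1/2}$-separated subset of centers in $\cq'$, and observe that each ball of $\cq'$ lies within $R^{1/2}$ of a chosen center with at most $O(1)$ originals assigned per chosen center. The balls in $\cq''$ then have $O(1)$ overlap, and each $Q\in\cq''$ satisfies $|X\cap Q|\sim R^{\al_2}$ and $Q\subset B_r$, so
$$(\#\cq'')\cdot R^{\al_2} \lesssim \sum_{Q\in\cq''}|X\cap Q| \lesssim |X\cap B_r| \lesssim r,$$
the last inequality being the rescaled Katz-Tao bound at scale $r$. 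Therefore $\#\cq'\lesssim \#\cq''\lesssim r R^{-\al_2}$, and dividing through by $r/R^{1/2}$ yields $\#\cq'/(r/R^{1/2})\lesssim R^{1/2-\al_2}$, uniformly in $B_r$.

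I do not foresee any real obstacle here. The only mildly delicate step is the Vitali thinning used to reduce $\cq'$ to a subfamily of $O(1)$-overlap; everything else is just a rescaling of the Katz-Tao condition combined with the fact that every ball of $\cq$ has the prescribed amount $R^{\al_2}$ of $X$-mass. If one instead adopts the (common) convention that a ``cover by $R^{1/2}$-balls'' already has bounded overlap, then even the thinning step is unnecessary and the proof reduces to one line.
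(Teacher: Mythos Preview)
Your core argument is correct and is exactly the paper's one-line proof: the Katz--Tao hypothesis rescales to $|X\cap B_r|\lesssim r$ for $r\in[1,R]$, and summing the masses $|X\cap Q|\sim R^{\al_2}$ over the $Q\subset B_r$ gives the bound.

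One genuine slip: the Vitali thinning step does not work as written. Taking a maximal $R^{1/2}$-separated subset of centers does \emph{not} guarantee that each chosen center receives $O(1)$ originals; if $\cq'$ contained, say, $N$ balls with centers all within distance $1$ of each other, the separated set has size $1$ but all $N$ originals get assigned to it. So you cannot conclude $\#\cq''\gtrsim\#\cq'$ for an arbitrary family. This step would only go through if the original centers were already $\gtrsim R^{1/2}$-separated, which is precisely the bounded-overlap hypothesis you are trying to manufacture. Your closing remark is the right fix: in the paper (and in standard usage) the cover $\cq$ is finitely overlapping from the outset, so the thinning is unnecessary and the proof collapses to the single line the paper gives.
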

\begin{proof}
It follows immediately from the fact that $|X\cap B_r|\lesssim r$.
\end{proof}

By random sampling (see \cite[Lemma 1.6]{Wang-Wu}, for example), we have

\begin{lemma}
\label{katz-tao-set-lem}
Let $\de\in(0,1)$, and let $X\subset[0,1]^2$ be a union of $\de$-balls.
Let $\ga\geq1$ be defined as 
\begin{equation}
\label{katz-tao-constant}
    \ga=\ga_X=\sup_{r\in[\de,1]}\sup_{x\in[0,1]^2}\frac{|X\cap B(x,r)|\de^{-2}}{r/\de}.
\end{equation}
Then there exists $X'\subset X$ with $|X|\approx\ga^{-1}|X|$ such that $X'$ is a Katz-Tao $(\de,1)$-set.
\end{lemma}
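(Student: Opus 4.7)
\noindent\textbf{Proof proposal for Lemma \ref{katz-tao-set-lem}.}

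The natural strategy is random sampling at rate $p:=c\gamma^{-1}$ for a small absolute constant $c$: form $X'\subset X$ by including each $\delta$-ball of $X$ in $X'$ independently with probability $p$. The idea is that $\gamma$ measures, by definition, the worst-case excess of $X$ over a Katz-Tao $(\delta,1)$-set, so thinning by a factor $\gamma^{-1}$ should precisely correct this excess, in expectation and then with high probability.

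First I would verify the two requirements in expectation. By linearity, $\mathbb{E}|X'|=p|X|\sim\gamma^{-1}|X|$, and for any $x\in[0,1]^2$ and $r\in[\delta,1]$, writing $N(x,r):=|X'\cap B(x,r)|\delta^{-2}$ for the number of selected $\delta$-balls inside $B(x,r)$,
\begin{equation}
\mathbb{E}[N(x,r)]=p\cdot|X\cap B(x,r)|\delta^{-2}\leq p\gamma(r/\delta)=c(r/\delta).
\end{equation}
Thus the Katz-Tao property holds in expectation on the nose. Next I would convert this to a high-probability statement via a Chernoff bound: for any $\lambda\geq 1$,
\begin{equation}
\Pr\!\bigl[N(x,r)>\lambda\max(r/\delta,\log\delta^{-1})\bigr]\leq \delta^{C\lambda}
\end{equation}
with $C$ growing in $\lambda$. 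Choosing $\lambda$ large enough makes this probability much smaller than $\delta^{100}$, say.

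The third step is a union bound. Let $\mathcal{N}$ be a $\delta$-net of $[0,1]^2$ (of cardinality $\lesssim \delta^{-2}$) and let $\mathcal{R}$ be the set of dyadic radii in $[\delta,1]$ (of cardinality $\lesssim \log\delta^{-1}$). Then with probability at least, say, $9/10$, simultaneously for every $(x_0,r)\in\mathcal{N}\times\mathcal{R}$ we have $N(x_0,r)\lesssim \max(r/\delta,\log\delta^{-1})$; and simultaneously (by a second moment / Chebyshev argument applied to $|X'|$) we have $|X'|\geq \tfrac12 p|X|\gtrsim \gamma^{-1}|X|$. Any ball $B(x,r)$ with $x\in[0,1]^2$ and $r\in[\delta,1]$ is covered by $B(x_0,2r')$ for some net pair $(x_0,r')\in\mathcal{N}\times\mathcal{R}$ with $r'\leq 2r$, so the control on $\mathcal{N}\times\mathcal{R}$ transfers to all $(x,r)$ at the cost of an absolute factor. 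Fixing a realization in this good event yields the required $X'$, since the factor $\max(1,\delta\log\delta^{-1}/r)$ is absorbed into the constant $C$ in the definition of Katz-Tao $(\delta,1,C)$-set, and in any case is $\lessapprox 1$ in the paper's notation.

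The only real obstacle is the small-scale regime $r/\delta\lesssim \log\delta^{-1}$, where the expected count $c(r/\delta)$ is too small for a pure Chernoff tail to produce a Katz-Tao constant independent of $\delta$. This is why I would allow the bound $N(x,r)\lesssim\max(r/\delta,\log\delta^{-1})$, which is harmless for the intended applications; alternatively one can replace $p$ by $p/\log\delta^{-1}$ to kill the logarithm entirely, at the cost of the logarithmic factor in $|X'|$, still within the $\approx$ notation of the paper.
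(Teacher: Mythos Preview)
Your proposal is correct and follows exactly the approach the paper indicates: the paper does not give a self-contained proof but simply writes ``By random sampling (see \cite[Lemma 1.6]{Wang-Wu}, for example)'', and your argument is precisely the standard random-sampling-plus-Chernoff-plus-union-bound proof that this citation points to. Your handling of the small-scale regime $r/\delta\lesssim\log\delta^{-1}$ via the $\lessapprox$ notation is consistent with the paper's conventions.
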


\smallskip

\begin{definition}
Let $L$ be a family of lines in $\ZR^2$ and let $\de\in(0,1)$.
A {\bf shading} $Y:L\to B^2(0,1)$ is an assignment such that $Y(\ell)\subset N_\de(\ell)\cap B(0,1)$ is a union of $\de$-balls in $\ZR^2$ for all $\ell\in L$.
We say $Y$ is {\bf $\la$-dense}, if $|Y(\ell)|\geq \la |N_\de(\ell)|$.
\end{definition}

\begin{definition}
Let $\de\in(0,1)$ and let $(L,Y)_\delta$ be a set of lines and shading.
Let $0<\e_2<\e_1<1$.
We say $Y$ is {\bf $(\e_1 ,\e_2)$-two-ends} if for all $\ell\in L$ and all $\de\times\de^{\e_1}$-tubes $J\subset N_\de(\ell)$, there exists a constant $C$ such that 
\begin{equation}
\nonumber
    |Y(\ell)\cap J|\leq C\de^{\e_2} |Y(\ell)|.
\end{equation}
\end{definition}

The next two-ends Furstenberg estimate was proved in \cite[Theorem 2.1]{Wang-Wu}.
\begin{theorem}
\label{two-ends-furstenberg}
Let $\de\in(0,1)$.
Let $(L,Y)_\de$ be a set of directional $\de$-separated lines in $\ZR^2$ with an $(\e_1, \e_2)$-two-ends, $\lambda$-dense shading. Then for any $\e>0$,
\begin{equation}
\label{eq:thm2.1}
    \Big|\bigcup_{\ell\in L}Y(\ell)\Big|\geq c_{\e,\e_2}\de^{\e}\de^{\e_1/2} \la^{1/2}\sum_{\ell\in L}|Y(\ell)|.
\end{equation}
\end{theorem}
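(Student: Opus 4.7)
The plan is to prove Theorem~\ref{two-ends-furstenberg} via an incidence-geometric $L^2$ argument, with the two-ends condition entering through a sharp bound on pairwise shading overlaps, and the estimate closed by induction on scales.

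First I would apply the Cauchy--Schwarz inequality to the counting function $f = \sum_{\ell \in L} \Id_{Y(\ell)}$:
\begin{equation*}
    \Big(\sum_{\ell \in L} |Y(\ell)|\Big)^2 \leq \Big|\bigcup_{\ell \in L} Y(\ell)\Big| \cdot \sum_{\ell_1, \ell_2 \in L} |Y(\ell_1) \cap Y(\ell_2)|,
\end{equation*}
so that the target lower bound on $|\bigcup_\ell Y(\ell)|$ reduces to proving an upper bound of shape $\sum_{\ell_1, \ell_2} |Y(\ell_1) \cap Y(\ell_2)| \lessapprox \delta^{-\varepsilon_1/2} \lambda^{-1/2} \sum_\ell |Y(\ell)|$ for the overlap sum.

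To bound the overlap sum I would split pairs by the angle $\theta = \theta(\ell_1, \ell_2) \geq \delta$. For \emph{wide-angle} pairs $\theta \geq \delta^{1-\varepsilon_1}$, the parallelogram $N_\delta(\ell_1) \cap N_\delta(\ell_2)$ has diameter $\sim \delta/\theta \leq \delta^{\varepsilon_1}$ and therefore fits inside a $\delta \times \delta^{\varepsilon_1}$-subtube of $N_\delta(\ell_1)$, so the two-ends hypothesis gives the strong bound $|Y(\ell_1) \cap Y(\ell_2)| \leq \delta^{\varepsilon_2}|Y(\ell_1)|$. For \emph{narrow-angle} pairs $\theta \in [\delta, \delta^{1-\varepsilon_1}]$, the purely geometric bound $|Y(\ell_1) \cap Y(\ell_2)| \lesssim \delta^2/\theta$ combined with the directional $\delta$-separation of $L$ (which bounds $\#\{\ell_2 : \theta(\ell_1, \ell_2) \sim \tau\}$ by $\sim \tau/\delta$) controls the narrow contribution. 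Summing dyadically over angle scales and substituting the $\lambda$-density $|Y(\ell)| \geq \lambda |N_\delta(\ell)|$ should then deliver the desired overlap bound, and Cauchy--Schwarz converts this into the claimed lower bound on $|\bigcup_\ell Y(\ell)|$.

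The main obstacle I anticipate is the narrow-angle contribution: in its naive form, the pure $L^2$ argument above produces a factor of $\lambda^{-1}$ rather than the sharper $\lambda^{-1/2}$ required. To recover the correct exponent, I would close the argument by induction on scales, rescaling each narrow-angle cluster at the finer scale $\delta^{1-\varepsilon_1}$ and applying the theorem inductively there, with the two-ends and $\lambda$-density hypotheses propagating appropriately to the rescaled shading. The wide-angle contribution then closes via the two-ends hypothesis as above, and the small $\delta^{\varepsilon}$ losses at each stage accumulate acceptably. Balancing the inductive exponents against $\varepsilon_1, \varepsilon_2$ so that the final bound carries precisely the factor $\delta^{\varepsilon_1/2} \lambda^{1/2}$ (rather than $\delta^{\varepsilon_1} \lambda$ or a strictly weaker pairing) is the delicate heart of the argument.
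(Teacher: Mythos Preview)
The paper does not supply its own proof of Theorem~\ref{two-ends-furstenberg}: the result is quoted verbatim from \cite[Theorem~2.1]{Wang-Wu} and used as a black box. So there is no in-paper argument to compare your proposal against; what follows is an assessment of whether your outline could stand as an independent proof.

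There is a genuine gap. The Cauchy--Schwarz/$L^2$ framework you set up is essentially the C\'ordoba argument, and on its own it is capped at the exponent $\lambda^{1}$, not $\lambda^{1/2}$. Concretely: with $|Y(\ell)|\sim\lambda\delta$ and $N=\#L$, the dyadic angle sum over \emph{all} off-diagonal pairs already gives $\sum_{\ell_1\neq\ell_2}|Y(\ell_1)\cap Y(\ell_2)|\lesssim N\delta\log(1/\delta)$ from the geometric bound alone, and plugging this into Cauchy--Schwarz yields only $|\bigcup Y(\ell)|\gtrapprox \lambda\sum|Y(\ell)|$. The two-ends hypothesis, in the way you invoke it, does not improve this sum: for wide-angle pairs $\theta\gtrsim\delta^{1-\varepsilon_1}$ the two-ends bound $|Y(\ell_1)\cap Y(\ell_2)|\le\delta^{\varepsilon_2}|Y(\ell_1)|$ is typically \emph{weaker} than the geometric bound $\delta^2/\theta$ once you sum over the $\sim\theta/\delta$ lines at each angle scale, so it contributes nothing to the $L^2$ bookkeeping. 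You correctly flag that the narrow-angle contribution is the obstruction, but the sentence ``close the argument by induction on scales\ldots with the two-ends and $\lambda$-density hypotheses propagating appropriately'' is precisely where the content is missing. After rescaling a narrow-angle bundle the two-ends condition does \emph{not} automatically survive at the new scale, and without a mechanism that genuinely exploits two-ends beyond a single pairwise intersection bound, no iteration of the $L^2$ step will lift $\lambda$ to $\lambda^{1/2}$.

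The $\lambda^{1/2}$ exponent here is a Furstenberg-type statement (morally a $(1/2,1)$-Furstenberg bound), and the proof in \cite{Wang-Wu} uses substantially more than an $L^2$ incidence count --- it draws on the recent machinery around the Furstenberg set problem. If you want to attempt a self-contained argument, you would need to replace the Cauchy--Schwarz step by something with genuine multi-scale content: e.g.\ a high-low decomposition, radial-projection/Bourgain-type input, or an appeal to discretized sum-product. As written, the proposal identifies the right enemy (the narrow-angle regime) but does not supply the weapon that defeats it.
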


By the point-line duality, Theorem \ref{two-ends-furstenberg} implies the following.
\begin{corollary}
\label{two-ends-furstenberg-cor}
Let $\cq$ be a family of $\de$-balls in $[0,1]^2$.
For each $Q\in\cq$, let $\cT(Q)$ be a family of $\de\times1$-rectangles intersecting $Q$.
Let $0<\e_2<\e_1<1$ and $M\geq1$.
For each $Q\in\cq$ and any arc $\si\subset\ZS^1$ with length $\de^{\e_1}$, let $\cT_\si(Q)=\{T\in\cT(Q): \text{the direction of $T$ is contained in $\si$}\}$.

Suppose $\cq$ is a Katz-Tao $(\de,1)$-set.
Suppose also that for all $Q\in\cq$, $\#\cT(Q)\geq M$, and $\#\cT_\si(Q)\lesssim \de^{\e_2}\#\cT(Q)$ for all $K^{-1}$-arcs $\si$.
Then
\begin{equation}
    \#\bigcup_{Q\in\cq}\cT(Q)\gtrapprox_{\e_2} \de^{\e_1/2} M^{3/2}\de^{1/2}\#\cq.
\end{equation}
\end{corollary}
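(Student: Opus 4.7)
The strategy is to deduce the corollary from Theorem~\ref{two-ends-furstenberg} via the standard point--line duality in $\ZR^2$. Parameterizing each $\delta \times 1$-rectangle $T$ by its (slope, intercept) pair $(m_T, c_T)$, I would associate to $T$ the dual point $T^* := (m_T, c_T)$, and to each $\delta$-ball $Q \in \cq$ centered at $(a_Q, b_Q)$ the $\delta$-neighborhood of the dual line $\ell_Q := \{(m,c) : c = -a_Q m + b_Q\}$. With this convention, $T \cap Q \neq \varnothing$ becomes $T^* \in N_\delta(\ell_Q)$, and I set the shading $Y(\ell_Q) := \bigcup_{T \in \cT(Q)} T^*$, which is a union of $\delta$-balls contained in $N_\delta(\ell_Q)$.

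Next I would translate each hypothesis of the corollary into the language of Theorem~\ref{two-ends-furstenberg}. The bound $\#\cT(Q) \geq M$ yields $|Y(\ell_Q)| \gtrsim M\delta^2$; since $|N_\delta(\ell_Q) \cap [0,1]^2| \sim \delta$, the shading is $\lambda$-dense with $\lambda \sim M\delta$. A $\delta^{\e_1}$-arc of tube directions in the primal corresponds to a subtube of $N_\delta(\ell_Q)$ of dimensions $\delta \times \delta^{\e_1}$ in the dual, so the two-ends hypothesis $\#\cT_\si(Q) \lesssim \delta^{\e_2} \#\cT(Q)$ translates exactly to the $(\e_1, \e_2)$-two-ends condition on $Y(\ell_Q)$. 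For the direction-$\delta$-separation of $\{\ell_Q\}$, I would pigeonhole $\cq$ according to a $\delta$-partition of the horizontal coordinate $a_Q$: dual lines drawn from different groups have $\delta$-separated slopes, while those within a single group are nearly parallel. In the latter regime the $N_\delta(\ell_Q)$ are essentially disjoint strips, so the union of shadings is comparable to the sum and the target bound is trivial; the logarithmic loss from the pigeonholing is absorbed by the $\gtrapprox$ notation.

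Finally, applying Theorem~\ref{two-ends-furstenberg} to the direction-separated system gives
\begin{equation}
\Bigl|\bigcup_{Q \in \cq} Y(\ell_Q) \Bigr| \gtrapprox \delta^{\e_1/2} \lambda^{1/2} \sum_{Q \in \cq} |Y(\ell_Q)|,
\end{equation}
and substituting $|\bigcup_Q Y(\ell_Q)| \sim \delta^2 \cdot \#\bigcup_Q \cT(Q)$, $\lambda^{1/2} \sim (M\delta)^{1/2}$, and $\sum_Q |Y(\ell_Q)| \sim M \#\cq \cdot \delta^2$ produces the claimed bound $\#\bigcup_Q \cT(Q) \gtrapprox \delta^{\e_1/2} M^{3/2} \delta^{1/2} \#\cq$. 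I expect the main obstacle to be the direction-separation step: the Katz-Tao $(\delta,1)$-set hypothesis controls cardinality but does not by itself force the horizontal coordinates $\{a_Q\}$ to be $\delta$-separated, so the case split between the direction-separated and the parallel regimes is where most of the care goes.
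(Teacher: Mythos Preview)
Your approach is exactly the paper's: it simply says ``by the point-line duality, Theorem~\ref{two-ends-furstenberg} implies the following,'' and your parameterization, your translation of the density and two-ends hypotheses, and your final substitution $\lambda\sim M\delta$, $\sum_Q|Y(\ell_Q)|\sim M\delta^2\#\cq$, $|\bigcup_Q Y(\ell_Q)|\sim\delta^2\#\bigcup_Q\cT(Q)$ are all correct.

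The one place that needs more care is the direction-separation step, and your proposed case split does not close it. Within a single horizontal $\delta$-group the parallel strips are disjoint, so you get $\#\bigcup_{Q\in\cq_I}\cT(Q)\geq M\#\cq_I$, which does beat the target for that group; but summing this over groups gives only an upper bound on $\#\bigcup_Q\cT(Q)$, since tubes coming from different groups may coincide. On the other side, selecting one representative per group and applying Theorem~\ref{two-ends-furstenberg} yields a bound proportional to the number $N$ of groups, not to $\#\cq\sim NG$. In the intermediate regime (say $N\sim\delta^{-3/4}$, $G\sim\delta^{-1/4}$, $M\sim\delta^{-1}$) neither bound, nor their maximum, reaches the target $\delta^{\e_1/2}M^{3/2}\delta^{1/2}\#\cq$. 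The Katz-Tao hypothesis on $\cq$ is doing real work here that your split does not capture: under point-line duality it becomes precisely a Katz-Tao $(\delta,1)$ condition on the family $\{\ell_Q\}$ in parameter space, and it is this line-side non-concentration (not mere direction-separation) that the two-ends Furstenberg estimate in \cite{Wang-Wu} actually uses. Once you invoke the estimate in that generality, no case split is needed and your computation goes through verbatim; this is what the paper's one-line appeal to duality is relying on.
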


\medskip

\subsection{Main argument}

\begin{proof}[Proof of Proposition \ref{main-prop}]

Let $r\in[R^{\e^2}, R]$, and let $X\subset B_r$ be a union of unit balls, whose $r^{-1}$-dilate is a Katz-Tao $(r^{-1},1)$-set.
We will prove that for all $A\geq r^{\e'}$,
\begin{equation}
\label{induction-esti}
    \|\BR_A Ef\|_{L^2(X)}^2\leq C_{\e,\e'} R^{\e}r^\e|X|^{4/9}\|f\|_2^2,
\end{equation}
by an induction on $r$.
The base case $r=R^{\e^2}$ is clear as $R^\e=r^{-\e}\geq r^{-100}$.
Note that \eqref{induction-esti} implies \eqref{main-esti}.

\smallskip

We begin with several steps of dyadic pigeonholing, similar to the proof of \cite[Proposition 4.13]{Demeter-Wu}.
Note that, either $ \|\BR_A Ef\|_{L^2(X)}^2\lesssim R^{-10}\|f\|_2^p$, which easily yields \eqref{induction-esti}, or there exists a set of $r$-tubes $\ZT$ and a partition $\ZT=\bigsqcup_{\ga}\ZT_{\ga}$ indexed by dyadic numbers $\ga\in[R^{-100},R^{10}]$, such that
\begin{enumerate}
    \item $ \|\BR_A Ef'\|_{L^2(X)}^2\lesssim  \|\BR_A Ef\|_{L^2(X)}^2$, where $f'=\sum_{T\in\ZT}f_T$.
    \item For all $T\in\ZT_{\ga}$, $\|f_T\|_2\sim \ga\|f\|_2$.
\end{enumerate}
Since there are $O(\log R)$ possible dyadic values of $\ga$, by the triangle inequality \eqref{broad-triangle}, there exists $\ga$, a number $A_1\gtrapprox A$, and a set $X_1\subset X$ such that 
\begin{enumerate}
    \item Let $\ZT_1=\ZT_\ga$ and $f_{1}=\sum_{T\in\ZT_{1}}f_T$. 
    We have
    \begin{equation}
    \label{reduction-0}
        \|\BR_A Ef\|_{L^2(X)}^2\lesssim  \|\BR_{A_1} Ef_1\|_{L^2(X_1)}^2.
    \end{equation}
    \item  $\|\BR_{A_1} Ef_1\|_{L^2(B)}$ are about the same for all $B\subset X_1$.
\end{enumerate}

\smallskip

By dyadic pigeonholing, there exists a set of $r^{1/2}$-balls $\cq_1$ such that 
\begin{enumerate}
    \item $|X_1\cap Q|$ are about the same for all $Q\in\cq_1$.
    \item We have
    \begin{equation}
    \label{reduction-1}
        \|\BR_{A_1} Ef_1\|_{L^2(X_1)}^2\lessapprox\sum_{Q\in\cq_1}\|\BR_{A_1} Ef_1\|_{L^2(X_1\cap Q)}^2.
    \end{equation}
\end{enumerate}
Assume $X_1\subset \cup_{\cq_1}$, without loss of generality.
Let $\al_1,\al_2\in(0,2)$ be such that $|X_1|\sim r^{\al_1}$ and $|X_1\cap Q|\sim r^{\al_2}$ for all $Q\in\cq_1$, so 
\begin{equation}
\label{number-Q}
    \#\cq_1\sim r^{\al_1-\al_2}.
\end{equation}
We will estimate the L.H.S. of \eqref{reduction-1} using two independent methods.

\bigskip

\noindent {\bf First method.}
We will use Theorem \ref{refined-decoupling-thm} and Corollary \ref{two-ends-furstenberg-cor} in this method.
There are four steps.

\medskip

{\bf Step 1. } 
Fix $Q\in\cq_1$.
Let $\ZT_1(Q)=\{T\in\ZT_1:T\cap Q\not=\varnothing\}$, and for each $K^{-1}$-cap $\si\in\Si$, let $\ZT_{1,\si}(Q)=\{T\in\ZT_1(Q):\text{ the direction of $T$ is contained in $\si$}\}$.
Then $\Si$ can be partitioned into subsets $\{\Si_M(Q): M\in[1,r^{1/2}], \text{dyadic}\}$ so that for all $\si\in\Si_M(Q)$, $\#\ZT_{1,\si}(Q)\sim M$.
By pigeonholing and the triangle inequality \eqref{broad-triangle}, there exists an $M(Q)$ and a number $A_2\gtrapprox A_1$ such that 
\begin{equation}
\label{each-Q}
    \|\BR_{A_1} Ef_1\|_{L^2(X_1\cap Q)}^2\lessapprox\|\BR_{A_2} Ef_{1,Q}\|_{L^2(X_1\cap Q)}^2,
\end{equation}
where $f_{1,Q}=\sum_{T\in\ZT_{1,Q}}f_T$ and $\ZT_{1,Q}=\bigcup_{\si\in\Si_{M(Q)}(Q)}\ZT_{1,\si}(Q)$.

By dyadic pigeonholing again, there exists a uniform $M$ and $\cq_2\subset\cq_1$ such that
\begin{enumerate}
    \item $\#\cq_2\gtrapprox\#\cq_1$.
    \item $M(Q)\sim M$ for all $Q\in\cq_2$.
\end{enumerate}
Thus, \eqref{each-Q} and the argument in \eqref{reduction-1}-\eqref{number-Q} gives
\begin{equation}
\label{sum-Q}
     \|\BR_{A_1} Ef_1\|_{L^2(X_1)}^2\lessapprox\sum_{Q\in\cq_2}\|\BR_{A_2} Ef_{1,Q}\|_{L^2(X_1\cap Q)}^2.
\end{equation}

\smallskip
At the end of this step, we prove an estimate for each term on the R.H.S. of \eqref{sum-Q}.
Via the triangle inequality and the essentially constant property,
\begin{align}
    &\|\BR_{A_2} Ef_{1,Q}\|_{L^2(X_1\cap Q)}^2\lesssim\|Ef_{1,Q}\|_{L^2(X_1\cap Q)}^2\lesssim KM\sum_{T\in\ZT_{1,Q}}\int_{X_1\cap Q}|Ef_T|^2\\ \label{second-estimate-1}
    &\lesssim K (M r^{\al_2-1})\sum_{T\in\ZT_{1,Q}}\int_{Q}|Ef_{T}|^2\lesssim K(M r^{\al_2-1})r^{1/2}\sum_{T\in \ZT_{1,Q}}\|f_T\|_2^2.
\end{align}

\medskip

{\bf Step 2.}
We are going to give a lower bound on $\bigcup_{Q\in\cq_2}\ZT_{2,Q}$ in terms of $M$.
Since $|X_1\cap Q|\sim r^{\al_2}$ for all $Q\in\cq_2$, by Lemmas \ref{katz-tao-constant-lem} and \ref{katz-tao-set-lem}, there exists a set $\cq_3\subset\cq_2$ such that 
\begin{enumerate}
    \item The $r^{-1}$-dilate of $\cup_{\cq_3}$ is a Katz-Tao $(r^{-1/2},1)$-set.
    \item $\#\cq_3\gtrapprox r^{\al_2-1/2}\#\cq_2$.
\end{enumerate}

Now consider the configuration $\{\ZT_{1,Q}:Q\in\cq_3\}$.
After an $r^{-1}$-dilation, this configuration obeys the assumption of Corollary \ref{two-ends-furstenberg-cor} with $\de=r^{-1/2+\e_0}$, $\de^{-\e_1/2}=K$, and $\de^{-\e_2/2}=A\gtrapprox\de^{-\e'}$.
Thus, Corollary \ref{two-ends-furstenberg-cor} gives 
\begin{equation}
    \#\bigcup_{Q\in\cq_3}\ZT_{1,Q}\gtrapprox_{\e'} r^{-O(\e_0)}K^{-1}M^{3/2}r^{-1/4}\#\cq_2\gtrapprox r^{-O(\e_0)}K^{-1}M^{3/2}r^{\al_2-3/4}\#\cq_1.
\end{equation}
Since $r^{O(\e_0)}\leq K^{O(1)}$, this implies 
\begin{equation}
\label{incidence-bound}
    \frac{\#\ZT_1}{M\#\cq_1}\gtrapprox_{\e'} K^{-O(1)}M^{1/2}r^{\al_2-3/4}.
\end{equation}

Recall that $\|f_T\|_2$ are about the same for all $T\in\ZT_1$ and $\#\ZT_{1,Q}\lesssim KM$.
Thus, by \eqref{sum-Q}, we can sum up all $Q\in\cq_2$ in \eqref{second-estimate-1} to obtain
\begin{align}
     \|\BR_{A_1} Ef_1\|_{L^2(X_1)}^2&\lesssim K(M r^{\al_2-1})r^{1/2}\sum_{Q\in\cq_2}\sum_{T\in \ZT_{1,Q}}\|f_T\|_2^2\\
     &\lesssim  K(M r^{\al_2-1})r^{1/2}\cdot (\#\cq_2)KM(\#\ZT_1)^{-1}\|f_1\|_2^2
\end{align}
Finally, we apply \eqref{incidence-bound} to the above estimate so that (recall $\cq_2\subset\cq_1$)
\begin{align}
\label{second-estimate-4}
    \|\BR_{A_1} Ef_1\|_{L^2(X_1)}^2&\lessapprox_{\e'}K^{O(1)}(M r^{\al_2-1})r^{1/2}\cdot r^{3/4-\al_2}M^{-1/2}\|f_1\|_2^2\\
    &= K^{O(1)} M^{1/2}r^{1/4}\|f_1\|_2^2.
\end{align}

\medskip

{\bf Step 3.}
Recall that $f_{1,Q}=\sum_{T\in\ZT_{1,Q}}f_T$ and $\ZT_{1,Q}=\bigcup_{\si\in\Si_M}\ZT_{1,\si}(Q)$.
For each $\si\in\Si$, let $\cq_{2,\si}=\{Q\in\cq_2:\si\in\Si_M(Q)\}$, and let $f_{1,\si}=\sum_{T\in\ZT_{1,\si}}f_T$, where $\ZT_{1,\si}=\{T\in\ZT_1: \text{ the direction of $T$ is contained in $\si$}\}$.
Thus, each $Q\in\cq_{2,\si}$ intersects $\lesssim M$ tubes $T\in\ZT_{1,\si}$.
By \eqref{sum-Q}, we have
\begin{align}
     &\|\BR_{A_1} Ef_1\|_{L^2(X_1)}^2\lessapprox\sum_{Q\in\cq_2}\|Ef_{1,Q}\|_{L^2(X_1\cap Q)}^2\\ \label{before-dec}
     &\lesssim K\sum_{Q\in\cq_2}\sum_{\si\in\Si_M(Q)}\|Ef_{1,\si}\|_{L^2(X_1\cap Q)}^2
     \lesssim K\sum_{\si\in\Si}\sum_{Q\in\cq_{2,\si}}\|Ef_{1,\si}\|_{L^2(X_1\cap Q)}^2.
\end{align}

For each $\si\in \Si$ in \eqref{before-dec}, we apply Theorem \ref{refined-decoupling-thm} so that
\begin{align}
    \sum_{Q\in\cq_{2,\si}}\|Ef_{1,\si}\|_{L^2(X_1\cap Q)}^2&\lesssim |X_1|^{2/3}\|Ef_{1,\si}\|_{L^6(\cup_{\cq_{2,\si}})}^2\\ \label{after-decoupling}
    &\lessapprox R^{\e_0} |X_1|^{2/3}M^{2/3}\Big(\sum_{T\in\ZT_{1,\si}}\|Ef_T\|_{L^6(w_{B_R})}^6\Big)^{1/3}.
\end{align}
By Bernstein's inequality, $\|Ef_T\|_{L^6(w_{B_R})}\lesssim \|f_T\|_2$.
Since $\|f_T\|_2$ are about the same for all $T\in\ZT_1$, we have
\begin{equation}
    \Big(\sum_{T\in\ZT_{1,\si}}\|Ef_T\|_{L^6(w_{B_R})}^6\Big)^{1/3}\lesssim \Big(\sum_{T\in\ZT_{1}}\|f_T\|_{2}^6\Big)^{1/3}\lesssim (\#\ZT_1)^{-2/3}\|f_1\|_2^2.
\end{equation}
Recall $R^{\e_0}\leq K$, $|X_1|\sim r^{\al_1}$, and $\#\cq_1\sim r^{\al_1-\al_2}$.
From these three estimates and \eqref{incidence-bound}, we plug the above estimate back into \eqref{after-decoupling} so that
\begin{align}
\label{after-dec-2}
    \sum_{Q\in\cq_{2,\si}}\|Ef_{1,\si}\|_{L^2(X_1\cap Q)}^2&\lessapprox K^{O(1)}|X_1|^{2/3}(\#\cq_1)^{-2/3}M^{-1/3}r^{1/2-2\al_2/3}\|f\|_2^2\\
    &\lesssim K^{O(1)}M^{-1/3}r^{1/2}\|f\|_2^2.
\end{align}
Finally, we sum up all $\si\in\Si$ in \eqref{after-dec-2} and use \eqref{before-dec} to obtain
\begin{equation}
\label{second-estimate-3}
    \|\BR_{A_1} Ef_1\|_{L^2(X_1)}^2\lessapprox K^{O(1)}M^{-1/3}r^{1/2}\|f\|_2^2.
\end{equation}

\medskip

{\bf Step 4.} Interpolate between \eqref{second-estimate-4} and \eqref{second-estimate-3} to have our first estimate:
\begin{equation}
\label{second-estimate}
    \|\BR_{A_1} Ef_1\|_{L^2(X_1)}^2\lesssim K^{O(1)}r^{2/5}\|f\|_2^2.
\end{equation}

\bigskip  

\noindent {\bf Method 2.}
We use induction on scales in this method.

\medskip

{\bf Step 1: Two-ends reduction.}
For each $r$-tube $T\in\ZT_1$, partition $T$ into sub-tubes $\cj(T)=\{J\}$ of length $r^{1-\e^2}$. 
Then, partition the set $\cj(T)=\bigcup_\la\cj_\la(T)$, where $\la\geq 1$ is a dyadic number and $J\cap X_1$ contains $\sim \la$ many $r^{1/2}$-balls for all $J\in\cj_\la(T)$. 
Thus, 
\begin{equation}
    Ef_1=\sum_{T\in\ZT_1}Ef_T=\sum_{\la}\sum_{T\in\ZT_1}\sum_{J\in\cj_\la(T)}Ef_{T}\Id_J.    
\end{equation}

For a fixed $\la$, consider the partition $\ZT_1=\bigcup_\be\ZT_{1,\be}$, where $\be\in[1,r^{\e^2}]$ is a dyadic number and $\#\cj_\la(T)\sim\be$ for all $T\in\ZT_{1,\be}$. 
As a result, 
\begin{equation}
    \sum_{\la}\sum_{T\in\ZT_1}\sum_{J\in\cj_\la(T)}Ef_{T}\Id_J=\sum_{\la}\sum_\be\sum_{T\in\ZT_{1,\be}}\sum_{J\in\cj_\la(T)}Ef_{T}\Id_J.    
\end{equation}

Similar to the argument in the first method (see Step 1 of the proof of \cite[Proposition 4.13]{Demeter-Wu} as well), since $Ef_1=\sum_{\la}\sum_\be\sum_{T\in\ZT_{1,\be}}\sum_{J\in\cj_\la(T)}Ef_{T}\Id_J$ and since there are $O((\log R)^2)$ possible pairs of $(\la,\be)$, by the triangle inequality \eqref{broad-triangle}, there is an $A_2\gtrapprox A_1$ such that 
\begin{equation}
\label{reduction-2}
    \|\BR_{A_1} Ef_1\|_{L^2(X_1)}^2\lessapprox \big\|\BR_{A_2} \big(\sum_{T\in\ZT_{1,\be}}\sum_{J\in\cj_\la(T)}Ef_{T}\Id_J\big)\big\|_{L^2(X_1)}^2.
\end{equation}

\smallskip 

Let $B_k$ be a family of $r^{1-\e^2}$-balls that cover $B_r$.
For each $B_k$, define
\begin{equation}
\label{f-1-k}
    (f_1)_{k}=\sum_{\substack{T\in\ZT_{1,\be} \text{ such that}\\ \exists J\in\cj_\la(T),\, J\cap B_k\not=\varnothing}} f_{T},
\end{equation}

\medskip

{\bf Step 2: The non-two-ends scenario, $\be\leq r^{\e^4}$.}
By the definition of \eqref{f-1-k}, we have for each $B_k$,
\begin{align}
\label{related}
     \big\|\BR_{A_2} \big(\sum_{T\in\ZT_{1,\be}}\sum_{J\in\cj_\la(T)}Ef_{T}\Id_J\big)\big\|_{L^2(X_1\cap B_k)}^2\lesssim \big\|\BR_{A_2}E(f_1)_k\big\|_{L^2(X_1\cap B_k)}^2.
\end{align}
Note that for each $T\in\ZT_{1,\be}$, there are $\lessapprox r^{\e^{4}}$ many $B_k$ such that $\exists J\in\cj_\la(T), J\cap B_k\not=\varnothing$.
As a consequence, 
\begin{equation}
\label{l2-related}
    \sum_{k}\|(f_1)_k\|_2^2\lesssim r^{\e^{4}}\|f_1\|_2^2\lesssim r^{\e^4}\|f\|_2^2. 
\end{equation}
Since $A_2\gtrapprox A\geq r^{\e'}$, we have $A_2\geq r^{(1-\e)\e'}$.
Note that the $r^{\e^2-1}$-dilate of $X_1\cap B_k$ is a Katz-Tao $(r^{\e^2-1},1)$-set.
Apply \eqref{induction-esti} as an induction hypothesis on each $r^{1-\e^2}$-ball $B_k$ to have 
\begin{equation}
    \big\|\BR_{A_2}E(f_1)_k\big\|_{L^2(X_1\cap B_k)}^2\leq C_{\e,\e'} R^{\e}r^{(1-\e^2)\e}|X_1\cap B_k|^{4/9}\|(f_1)_k\|_2^2.
\end{equation}
Sum up all $B_k$ in \eqref{related} using \eqref{l2-related} and plug it back to \eqref{reduction-2} and \eqref{reduction-0} so that
\begin{align}
    \|\BR_A Ef\|_{L^2(X)}^2&\lessapprox \, \sum_{k} C_{\e,\e'} R^{\e}r^{(1-\e^2)\e}|X_1\cap B_k|^{4/9} \|(f_1)_k\|_2^2\\
    &\lesssim  r^{-\e^3+\e^4}C_\e R^\e r^{\e}|X|^{4/9}\|f\|_2^2.
\end{align}
This proves \eqref{induction-esti}.

\medskip

{\bf Step 3: The two-ends scenario, $\be\geq r^{\e^{4}}$.}
Using \eqref{f-1-k} and \eqref{reduction-2}, we have
\begin{equation}
\label{reduction-3}
    \|\BR_{A_1} Ef_1\|_{L^2(X_1)}^2\lessapprox r^{O(\e^2)}\sup_k\sum_{Q\in\cq_1,Q\subset B_k}\big\|\BR_{A_2}E(f_1)_k\big\|_{L^2(X_1\cap Q)}^2.
\end{equation}
For each $Q\in\cq_1$, let $\ZT_{1,\be}(Q)=\{T\in\ZT_{1,\be}:\exists J\in\cj_\la(T),\, J\cap Q\not=\varnothing\}$.
Define
\begin{equation}
\label{M}
    M=\sup_{Q\in\cq_1} \#\ZT_{1,\be}(Q).
\end{equation}
Now for each $T\in\ZT_{1,\be}$, the shading $Y(T)=\bigcup_{J\in\cj_\la(T)}J\cap (\cup_{\cq_1})$ is $(\e^2,\e^4)$-two-ends, and it contains $\gtrsim \la\be$ many $r^{1/2}$-balls.
Thus, by considering a single bush rooted at $Q$, where $\#\ZT_{1,\be}(Q)$ reaches the maximum in \eqref{M}, we have (recall \eqref{number-Q}) 
\begin{equation}
\label{number-Q-lowerbound}
    r^{\al_1-\al_2}\sim\#\cq_1\gtrsim r^{-\e^2}M\la\be.
\end{equation}

\smallskip

Now, similar to \eqref{second-estimate-1} in Method 1, we have
\begin{equation}
\label{sum-Q-method-2}
    \big\|\BR_{A_2}E(f_1)_k\big\|_{L^2(X_1\cap Q)}^2\lesssim Mr^{\al_2-1}r^{1/2}\sum_{T\in\ZT_{1,\be}(Q)}\|f_T\|_2^2.
\end{equation}
Since each $T\in\ZT_{1,\be}$  belongs to $\lesssim \la\be$ many $\{\ZT_{1,\be}(Q)\}_{Q\in\cq_1}$,  by \eqref{reduction-3}, \eqref{sum-Q-method-2} gives
\begin{align}
    \|\BR_{A_1} Ef_1\|_{L^2(X_1)}^2&\lessapprox r^{O(\e^2)} Mr^{\al_2-1}r^{1/2}\sum_{Q\in\cq_1}\sum_{T\in\ZT_{1,\be}(Q)}\|f_T\|_2^2\\ \label{third-estimate}
    &\lesssim r^{O(\e^2)} Mr^{\al_2-1}r^{1/2}\la\be\|f_1\|_2^2\lesssim r^{O(\e^2)} r^{\al_1-\frac{1}{2}}\|f_1\|_2^2.
\end{align}
Here we use \eqref{number-Q-lowerbound} in the last inequality. 
\eqref{third-estimate} is our second estimate.

\bigskip

\noindent {\bf Conclusion.}
We optimize the two estimates \eqref{second-estimate} and \eqref{third-estimate} as follows.
Since $K\lesssim r^{\e^2}$ and since $|X_1|\sim r^{\al_1}$, plug $\eqref{second-estimate}^{5/9}\eqref{third-estimate}^{4/9}$ back to \eqref{reduction-0} to have
\begin{equation}
    \|\BR_A Ef\|_{L^2(X)}^2\lessapprox_{\e'} r^{O(\e^2)}|X_1|^{4/9}\|f_1\|_2^2\leq  C_{\e,\e'} R^{\e}r^\e|X|^{4/9}\|f\|_2^2.
\end{equation}
This proves \eqref{induction-esti}. \qedhere

\end{proof}

\medskip 

\begin{corollary}
\label{main-cor}
Let $R\geq 1$ and $\e\in(0,10^{-3})$.
Let $K\leq R^{\e^4}$, $\e'\in(0,\log K/\log R)$, and let $\Si=\{\si\}$ be a collection of finitely overlapping $K^{-1}$-caps. 
Let $X\subset B_R$ be a union of unit balls such that the $R^{-1}$-dilate of  $X$ is a Katz-Tao $(R^{-1},1)$-set.
Suppose $A\geq R^{\e'}$.
Then when $q\geq 18/5$,
\begin{equation}
\label{main-esti-cor}
    \|\BR_A Ef\|_{L^q(X)}\leq C_{\e,\e'} R^{\e}\|f\|_2.
\end{equation}
\end{corollary}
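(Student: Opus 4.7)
The plan is to derive the $L^q$ bound from the weighted $L^2$ estimate in Proposition \ref{main-prop} via a standard layer-cake interpolation against the trivial $L^\infty$ bound. The value $q=18/5$ is the natural endpoint: it is characterized by $1/2 - 1/q = 2/9$, which corresponds via H\"older's inequality to the exponent $\al = 4/9$ appearing in the $L^2$ estimate.

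First I would observe that, because $f$ is supported on the compact interval $[-1,1]$, one has the trivial pointwise bound
\begin{equation*}
|\BR_A Ef(x)| \leq |Ef(x)| \leq \|f\|_{L^1} \lesssim \|f\|_{L^2},
\end{equation*}
so the superlevel sets of $\BR_A Ef$ inside $X$ are empty for $\lambda \gtrsim \|f\|_2$. I would also use the essentially-constant property of $|\BR_A Ef|$ on unit balls (a consequence of its wave-packet structure at the unit frequency scale) to replace superlevel sets by unions of unit balls, so that Proposition \ref{main-prop} can be applied.

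Next, for $\lambda>0$ let $X_\lambda := \{x \in X : |\BR_A Ef(x)| > \lambda\}$, made into a union of unit balls by the previous reduction. As a subset of $X$, its $R^{-1}$-dilate remains a Katz--Tao $(R^{-1},1)$-set with the same constant (the property is hereditary), so Proposition \ref{main-prop} applied to $X_\lambda$ yields
\begin{equation*}
\lambda^2 |X_\lambda| \leq \|\BR_A Ef\|_{L^2(X_\lambda)}^2 \leq C_{\e,\e'} R^{2\e} |X_\lambda|^{4/9} \|f\|_2^2,
\end{equation*}
which rearranges to the restricted weak-type bound $|X_\lambda| \lessapprox R^{18\e/5}\lambda^{-18/5}\|f\|_2^{18/5}$.

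Finally, I would plug this into the layer-cake representation
\begin{equation*}
\|\BR_A Ef\|_{L^q(X)}^q = q\int_0^{C\|f\|_2} \lambda^{q-1}|X_\lambda|\,d\lambda,
\end{equation*}
splitting at the threshold $\lambda_0 \sim R^\e \|f\|_2 |X|^{-5/18}$ where the weak-type bound matches the trivial bound $|X_\lambda|\leq|X|$. Using $|X_\lambda|\leq|X|$ for $\lambda\leq\lambda_0$ and the weak-type bound for $\lambda>\lambda_0$, the hypothesis $q\geq 18/5$ is precisely what makes the low-$\lambda$ contribution satisfy $|X|\lambda_0^q \sim R^{q\e}\|f\|_2^q|X|^{1-5q/18} \leq R^{q\e}\|f\|_2^q$ (since $5q/18 \geq 1$), while the high-$\lambda$ contribution integrates to $\lesssim R^{18\e/5}\|f\|_2^q$, with only a harmless logarithmic loss at the endpoint $q=18/5$ that is absorbed by enlarging $\e$. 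Taking $q$-th roots completes the proof. I do not anticipate any real obstacle beyond Proposition \ref{main-prop} itself; the only points meriting verification are the inheritance of the Katz--Tao property by subsets and the elementary arithmetic of the split.
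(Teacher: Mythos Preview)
Your approach is correct and essentially the same as the paper's: the paper dyadic-pigeonholes on $\|\BR_A Ef\|_{L^q(B)}$ over unit balls $B\subset X$ and then applies a reverse H\"older together with Proposition~\ref{main-prop} on the resulting level set, which is equivalent to your layer-cake split at the threshold $\lambda_0$. One small correction: the inequality $\BR_A Ef(x)\leq |Ef(x)|$ is false in general (cancellation in $\sum_\sigma Ef_\sigma$ can make $|Ef|$ much smaller than individual $|Ef_\sigma|$), but the bound you actually need, namely $\BR_A Ef(x)\leq\max_\sigma|Ef_\sigma(x)|\leq\max_\sigma\|f_\sigma\|_1\lesssim\|f\|_2$, is immediate.
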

\begin{proof}
For a dyadic number $\la\in [R^{-10}\|f\|_2,R^{10}\|f\|_2]$, let 
\begin{equation}
    X_\la=\{B\subset X:B\text{ is a unit ball, }\|\BR_A Ef\|_{L^q(B)}\sim\la\}
\end{equation}
Also, let $X_{small}=\{B\subset X: B\text{ is a unit ball, }\|\BR_A Ef\|_{L^q(B)}\leq R^{-10}\|f\|_2\}$.
Then, we have $X=X_{small}\cup(\cup_{\la}X_\la)$.

An easy computation shows that $ \|\BR_A Ef\|_{L^q(X_{small})}\lesssim R^{-5}\|f\|_2$, which trivially yields \eqref{main-esti-cor}. 
Thus, by dyadic pigeonholing, we may assume there is a $\la$ so that
\begin{equation}
    \|\BR_A Ef\|_{L^q(X)}\lessapprox\|\BR_A Ef\|_{L^q(X_\la)}.
\end{equation}
Now by H\"older's inequality,
\begin{equation}
    \|\BR_A Ef\|_{L^q(X_\la)}^2\lesssim |X_\la|^{2/q-1}\|\BR_A Ef\|_{L^2(X_\la)}^2.
\end{equation}
This concludes \eqref{main-esti-cor} by Proposition \ref{main-prop}, as $1-2/q\geq 4/9$ when $q\geq 18/5$.
\qedhere

\end{proof}

\bigskip

\section{Applications}

First we state a standard broad narrow result that will be used repeatedly.
\begin{lemma}[Broad-narrow]
	\label{broad-narrow-lem}
	Let $K\geq10$.
	Let $\Si$ be a family of $K^{-1}$-intervals that forms a finite-overlapping cover of $[-1,1]$.
	Then for all $x\in\ZR^2$ and $A\in[1,K]$,  
	\begin{align}
	\label{broad-narrow}
		|Ef(x)|\lesssim A\max_{\si\in\Si}|Ef_{\si}(x)|+K\cdot \BR_AEf(x).
	\end{align}
\end{lemma}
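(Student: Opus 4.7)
The plan is to run the standard broad/narrow dichotomy, which amounts to a pigeonhole argument on the ordered sizes $\{|Ef_\si(x)|\}_{\si\in\Si}$ at a fixed point $x$. Since $\sum_\si \Id_\si \sim 1$ on $[-1,1]$ with bounded overlap and the intervals have length $K^{-1}$, I first record the two pieces of data I will use: $Ef(x)=\sum_{\si\in\Si}Ef_\si(x)$ (up to a bounded multiplicative constant from the overlap), and $\#\Si \lesssim K$.

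Next I set a threshold tailored to the point $x$: take $\la:=c K^{-1}|Ef(x)|$ for a suitably small absolute constant $c$, and re-index $\Si=\{\si_1,\si_2,\dots\}$ so that $|Ef_{\si_1}(x)|\geq|Ef_{\si_2}(x)|\geq\cdots$. Then I split into two cases according to whether $|Ef_{\si_A}(x)|$ lies above or below $\la$.

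In the narrow case $|Ef_{\si_A}(x)|<\la$, at most $A-1$ of the pieces exceed $\la$, so
\[
|Ef(x)|\lesssim A\max_{\si\in\Si}|Ef_\si(x)|+\#\Si\cdot\la\lesssim A\max_{\si\in\Si}|Ef_\si(x)|+\tfrac12|Ef(x)|,
\]
and rearranging gives the first term on the right of the lemma. In the broad case $|Ef_{\si_A}(x)|\geq\la$, taking $\Si':=\{\si_1,\dots,\si_A\}$ in Definition \ref{def-broad-function} produces $\BR_AEf(x)\geq\min_{\si\in\Si'}|Ef_\si(x)|\geq\la\gtrsim K^{-1}|Ef(x)|$, which rearranges to $|Ef(x)|\lesssim K\cdot\BR_AEf(x)$. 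Adding the two bounds yields \eqref{broad-narrow}.

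There is no real obstacle here: the only point that requires a moment's care is choosing the constant in $\la$ small enough that the total contribution $\#\Si\cdot\la$ from the "small" pieces can be absorbed into the left-hand side, and this relies only on the finite-overlap bound $\#\Si\lesssim K$. The hypothesis $A\leq K$ is used implicitly through the case split (otherwise the narrow case is vacuous and the statement becomes trivial), and the hypothesis $K\geq 10$ ensures the absorption has room.
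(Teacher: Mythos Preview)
Your argument is correct and is exactly the standard broad--narrow dichotomy; the paper does not supply a proof of this lemma at all (it is merely stated as ``a standard broad narrow result''), so there is nothing to compare against. One small remark: the identity $Ef(x)=\sum_{\si}Ef_\si(x)$ is literally true only when the $\si$ partition $[-1,1]$; with a finitely overlapping cover one should either pass to a subordinate partition of unity or note that $|Ef(x)|\le \sum_\si |E(f\psi_\si)(x)|$ for such a partition $\{\psi_\si\}$ and run the same dichotomy on those pieces---but this is cosmetic and your argument goes through unchanged.
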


\smallskip

For a $K^{-1}$-interval $\si\subset[-1,1]$ centered at $\xi_\si$, let $\cl_\si$ be the parabolic rescaling
\begin{equation}
\label{parabolic-rescaling}
    \cl_\si(x_1,x_2)=(K^{-1}(x_1+2x_2\xi_\si), K^{-2}x_2).
\end{equation}

\medskip

\subsection{Decay of Fourier transform of measures, and \texorpdfstring{$L^p$}{}-estimates for maximal extension operator}

We first prove a weighted $L^p$ estimate.

\begin{proposition}
\label{weighted-lp-prop}
Let $X\subset B_R$ be a union of unit balls that the $R^{-1}$-dilate of $X$ is a Katz-Tao $(R^{-1},1)$-set.
Then when $q\in[18/5,4]$ and $2/q+1/p=1$, for all $\e>0$,
\begin{equation}
\label{weighted-lp-esti}
    \|Ef\|_{L^q(X)}\leq C_\e R^\e\|f\|_p.
\end{equation}
\end{proposition}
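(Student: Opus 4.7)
The plan is to induct on $R$, using a broad--narrow decomposition (Lemma~\ref{broad-narrow-lem}), with the broad piece handled by Corollary~\ref{main-cor} and the narrow piece by parabolic rescaling plus the induction hypothesis. Let $Q(R)$ denote the best constant in \eqref{weighted-lp-esti}; the aim is $Q(R)\lesssim_{\epsilon} R^{\epsilon}$, with the base case $R=O(1)$ trivial. Fix $K=R^{\epsilon^4}$ and $A=R^{\epsilon'}$ for some small $\epsilon'\in(0,\epsilon^4)$. Lemma~\ref{broad-narrow-lem} combined with the pointwise bound $\max_\sigma|Ef_\sigma|^q\le\sum_\sigma|Ef_\sigma|^q$ gives
\[
\|Ef\|_{L^q(X)}\lesssim A\Bigl(\sum_\sigma\|Ef_\sigma\|_{L^q(X)}^q\Bigr)^{1/q}+K\|\BR_AEf\|_{L^q(X)}.
\]
Corollary~\ref{main-cor} bounds the broad term by $C_{\epsilon,\epsilon'}R^\epsilon\|f\|_2$, which is $\lesssim R^\epsilon\|f\|_p$ since $p\ge 2$ (forced by $q\le 4$) and $f$ is supported in $[-1,1]$, so only the narrow term remains.

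For the narrow piece I would apply the parabolic rescaling $\cl_\sigma$ from \eqref{parabolic-rescaling}. With $g_\sigma(\eta)=f(\xi_\sigma+K^{-1}\eta)$, a change of variables produces $\|Ef_\sigma\|_{L^q(X)}^q=K^{3-q}\|Eg_\sigma\|_{L^q(\cl_\sigma X)}^q$ and $\|g_\sigma\|_p=K^{1/p}\|f_\sigma\|_p$. The image $\cl_\sigma X$ lies in a rectangle of dimensions $\sim R/K\times R/K^2$, which I would cover by $\lesssim K$ balls of radius $R':=R/K^2$; on each cover ball, rounding $\cl_\sigma X$ up to unit balls at scale $R'$ produces a weight $\tilde X_j$ whose $(R')^{-1}$-dilate is (only) a Katz--Tao $((R')^{-1},1,CK^2)$-set, as one checks from the singular values $K^{-1},K^{-2}$ of $\cl_\sigma$. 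Lemma~\ref{katz-tao-set-lem} then decomposes each $\tilde X_j$ into $O(K^2)$ genuine Katz--Tao sets to which the induction hypothesis applies. Summing over $\sigma$ using $\sum_\sigma\|f_\sigma\|_p^q\le\|f\|_p^q$ (valid because $q\ge p$) together with the algebraic identity $6-q+q/p=4$ coming from $2/q+1/p=1$, one arrives at a recursion of the shape
\[
Q(R)\lesssim AK^{4/q}Q(R/K^2)+KR^{\epsilon},
\]
which, with the chosen parameters, iterates $\lesssim 1/\epsilon^4$ times down to the base case and yields $Q(R)\le C_\epsilon R^\epsilon$.

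The hardest step will be absorbing the $K^2$-inflation of the Katz--Tao constant, which is the unavoidable consequence of the anisotropy of $\cl_\sigma$. This loss must be balanced against the $K^{-2\epsilon}$ saving from the induction hypothesis at the reduced scale $R/K^2$, and the delicate accounting is precisely what pins down the threshold $q\ge 18/5$: only at this exponent does the narrow-sum factor $K^{4/q}=K^{10/9}$ stay small enough against the accumulated gains for the recursion to close.
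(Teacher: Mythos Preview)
Your broad--narrow skeleton and the handling of the broad term via Corollary~\ref{main-cor} are exactly right, and match the paper. The gap is in the narrow recursion: it does not close. With $K=R^{\e^4}$ you iterate $n\sim 1/(2\e^4)$ times, and the factor $AK^{4/q}$ you record accumulates to roughly $K^{4n/q}=R^{2/q}$, which is a fixed positive power of $R$ and swamps any $R^\e$ budget. Your final paragraph asserts that the threshold $q\ge 18/5$ is what makes this accounting balance, but in fact $18/5$ enters only through the broad estimate; your narrow loss $K^{4/q}$ is independent of that threshold and is fatal for every $q$.

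What you are missing is a density pigeonholing that exploits the locally constant property of $Ef_\si$ at the anisotropic scale. Before rescaling, cover $X$ by $K\times K^2$-rectangles $B_\si$ in the direction of $\si$ (on which $|Ef_\si|$ is essentially constant) and pigeonhole a value $s$ so that $|X\cap B_\si|\sim s$ on the surviving rectangles. This buys two things simultaneously: (i) $\int_{X}|Ef_\si|^q$ is replaced by $sK^{-3}\int_{\cup B_\si}|Ef_\si|^q$ over full rectangles, and (ii) the Katz--Tao constant of the family of \emph{rectangles} is not $K^2$ but $K^2/s$, since an $rK\times rK^2$ box meets $X$ in measure $\lesssim K^2 r$ and hence contains $\lesssim K^2 r/s$ rectangles of the family. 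After a further uniformization of $\|Ef_\si\|_{L^q(B_\si)}$, Lemma~\ref{katz-tao-set-lem} extracts a genuine Katz--Tao subfamily carrying a $\gtrapprox sK^{-2}$ fraction of the $L^q$ mass. The two $s$'s cancel and the net factor is $sK^{-3}\cdot K^2/s=K^{-1}$, so after the rescaling Jacobian $K^{3-q}$ and $\|g\|_p^q=K^{q/p}\|f_\si\|_p^q$ one lands on $K^{-1+3-q+q/p}=K^{0}$; combined with the $K^{-2\e}$ gain from applying the hypothesis at scale $R/K^2$, the narrow term contributes $K^{O(\e^2)-2\e}C_\e R^\e\|f\|_p^q$ and the induction closes. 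Your version skips the $s$-pigeonholing, rounds $\cl_\si X$ up to unit balls without compensation, and therefore pays the full $K^2$ Katz--Tao inflation plus the $K$ from tiling by $R/K^2$-balls with nothing to offset them.
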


\begin{proof}
For a given $\e$, pick $K=R^{\e^8}$ and $A=K^{\e^2}$.
By Lemma \ref{broad-narrow-lem}, we have
\begin{equation}
\label{broad-narrow-app-1}
    \|Ef\|_{L^q(X)}^q\lesssim K^{O(\e^2)}\sum_\si\int_X|Ef_\si|^q+K^{O(1)}\int_X|\BR_A Ef|^q.
\end{equation}
We distinguish two cases.

\smallskip

{\bf Case 1.} Suppose the first terms in \eqref{broad-narrow-app-1} dominates.
Fix $\si$.
Let $\cb_\si$ be a family of finite-overlapping $K\times K^2$-rectangles oriented with direction $\si$ that forms a cover of $X$.
By dyadic pigeonholing, there exist $s\in[1,K^3]$ and a set $\cb_{\si,s}\subset \cb_\si$ such that the following is true:
\begin{enumerate}
    \item For all $B_\si\in\cb_{\si,s}$, $|X\cap B_\si|\sim s$.
    \item
    We have, since $|Ef_\si|$ is essentially constant on each $B_\si\in\cb_{\si,s}$,
    \begin{equation}
        \int_X|Ef_\si|^q\lessapprox \int_{X\cap (\cup_{\cb_{\si,s}})}|Ef_\si|^q\lesssim sK^{-3}\int_{\cup_{\cb_{\si,s}}}|Ef_\si|^q.
    \end{equation}
\end{enumerate}
Similar to the proof of Corollary \ref{main-cor}, either we can easily conclude \eqref{weighted-lp-esti}, or by dyadic pigeonholing, there exists a union of $K\times K^2$-rectangles $\wt X_\si\subset\cup_{\cb_{\si,s}}$ so that 
\begin{enumerate}
    \item $\|Ef_\si\|_{L^q(B_\si)}$ are about the same for all $K\times K^2$-rectangles $B_\si\subset \wt X_\si$.
    \item We have
    \begin{equation}
    \label{uniform-in-K-rect}
        \int_X|Ef_\si|^q\lessapprox sK^{-3}\int_{\wt X_\si}|Ef_\si|^q.
    \end{equation}
\end{enumerate}

For any $r\in[1,R/K^2]$, let $Q_r$ be an $rK\times rK^2$-rectangle oriented with direction $\si$.
Since the $R^{-1}$-dilate of $X$ is a Katz-Tao $(R^{-1},1)$-set, we know that $|X\cap Q_r|=\sum_{B_r\subset Q_r}|X\cap B_r|\lesssim K^2 r$, where $\{B_r\}$ are finite-overlapping $rK$-balls.
As a result
\begin{equation}
    \#\{B_\si\in\cb_{\si,s}:B_\si\subset Q_r\}\lesssim (K^2s^{-1})r.
\end{equation}
By Lemma \ref{katz-tao-set-lem}, there exists a union of $K\times K^2$-rectanlges $\wt X_\si'\subset \wt X_\si$ such that
\begin{enumerate}
    \item $|\wt X_\si'|\gtrapprox s K^{-2}|\wt X_\si|$.
    \item Recall \eqref{parabolic-rescaling}.
    Inside an unit ball, the $(R/K^2)^{-1}$-dilate of $\cl_\si(\wt X_\si')$ is a Katz-Tao $((R/K^2)^{-1},1)$-set.
\end{enumerate}
Let $g(\xi)=f(K^{-1}(\xi-\xi_\si))$, where $\xi_\si$ is the center of $\si$.
Put the above information back to \eqref{uniform-in-K-rect} so that
\begin{equation}
\label{after-rescaling}
    \int_X|Ef_\si|^q\lessapprox K^{-1}\int_{\wt X_\si'}|Ef_\si|^q=K^{2-q}\int_{\cl_\si(\wt X_\si')}|Eg|^q.
\end{equation}

Note that $2/q+1/p=1$.
Apply \eqref{weighted-lp-esti} at scale $R/K^2$ and use Lemma \ref{wpt} to get
\begin{equation}
\label{apply-recaling}
    \int_{\cl_\si(\wt X_\si')}|Eg|^q\leq C_\e (R/K^2)^{\e}\|g\|_p^q=K^{-2\e}K^{q-2}C_\e R^\e\|f_\si\|_p^q.
\end{equation}
Put \eqref{apply-recaling} and \eqref{after-rescaling} back to \eqref{broad-narrow-app-1} and sum up $\si$ to obtain (note that $p\leq q$)
\begin{equation}
    \|Ef\|_{L^q(X)}^q\lessapprox K^{O(\e^2)-2\e}C_\e R^\e\sum_\si\|f_\si\|_p^q\leq C_\e R^\e\|f\|_p^q.
\end{equation}

\smallskip

{\bf Case 2.} Suppose the second terms in \eqref{broad-narrow-app-1} dominates.
Then, by Corollary \ref{main-cor} with $\e:=\e^2$, we have (note that $p\geq 2$ when $q\in[18/5,4]$)
\begin{equation}
    \|Ef\|_{L^q(X)}^q\lesssim K^{O(1)}\|\BR_A Ef\|_{L^q(X)}^q\lesssim R^{O(\e^2)}\|f\|_2^q\lesssim R^{O(\e^2)}\|f\|_p^q. \qedhere
\end{equation}

\end{proof}

\medskip

\subsubsection{Decay of circular means of Fourier transform of measures}

For convenience, we restate Theorem \ref{circular-thm} below.

\begin{theorem}
Let $\mu$ be a probability Frostman measure supported in $[0,1]^2$ such that $\mu(B_r)\lesssim r$ for all $B_r\subset [0,1]^2$.
Then $\si_p(1)\geq 1/2p$ for all $p\in[9/5,2]$.
\end{theorem}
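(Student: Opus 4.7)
The plan is to dualize the $L^p(\ZS^1)$ norm, rescale by a factor of $R$ so that the Frostman condition becomes a Katz-Tao $(R^{-1},1)$ condition on $B_R$, and then invoke Proposition \ref{weighted-lp-prop}. Let $p' = p/(p-1) \in [2, 9/4]$ and let $\tilde\mu$ denote the pushforward of $\mu$ under $x\mapsto Rx$, so $\tilde\mu$ is a probability measure supported in $B_R$ with $\tilde\mu(B(y, r)) \lesssim r/R$ for all $r \in [1, R]$, and $\wh\mu(R\xi) = \wh{\tilde\mu}(\xi)$. Duality on $\ZS^1$ gives
$$
\Big(\int_{\ZS^1} |\wh\mu(R\xi)|^p d\si(\xi)\Big)^{1/p} = \sup_{\|g\|_{L^{p'}(\ZS^1)} = 1} \Big|\int Eg \, d\tilde\mu\Big|,
$$
where $E$ is the extension operator associated to $\ZS^1$, so it suffices to show $|\int Eg\, d\tilde\mu| \lessapprox R^{-1/(2p)}$ for each such $g$.

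Next I would discretize and pigeonhole. Since $g$ is supported in $B(0,1)$, $Eg$ is essentially constant on each unit ball. Partition $B_R$ into unit balls $\{B_j\}$ with centers $x_j$ and weights $w_j := \tilde\mu(B_j) \lesssim R^{-1}$, so that $|\int Eg\, d\tilde\mu| \lesssim \sum_j w_j |Eg|(x_j)$. Dyadic pigeonholing first on the $w_j$'s and then on the $|Eg|(x_j)$'s produces, up to a $(\log R)^{O(1)}$ loss, a subcollection $J$ and dyadic values $w_0, c$ with $w_j \sim w_0$ and $|Eg|(x_j) \sim c$ for $j\in J$ and $\sum_{j\in J} w_j \gtrapprox 1$. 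Set $X := \bigcup_{j \in J} B_j$ and $\la := R w_0 \in [R^{-O(1)}, O(1)]$, so that $|X|\lesssim R/\la$ and $|\int Eg\,d\tilde\mu| \lessapprox w_0 c\, |X|$.

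The Frostman bound now yields the Katz-Tao structure of $X$: for every $y \in \ZR^2$ and $r \in [1, R]$,
$$
\#\{B_j \subset X: B_j \subset B(y, r)\} \leq \tilde\mu(B(y,r))/w_0 \lesssim r/\la,
$$
so the $R^{-1}$-dilate of $X$ is a Katz-Tao $(R^{-1}, 1, O(\la^{-1}))$-set. By Lemma \ref{katz-tao-set-lem} I extract $X' \subset X$ with $|X'| \gtrapprox \la|X|$ that is a genuine Katz-Tao $(R^{-1}, 1)$-set. Now I choose $q := 2p \in [18/5, 4]$ and note that $(q, p')$ satisfies $2/q + 1/p' = 1$, so Proposition \ref{weighted-lp-prop} gives $\|Eg\|_{L^q(X')} \lessapprox 1$. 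Since $|Eg|\sim c$ on the unit balls of $X'$, this reads $c \lessapprox |X'|^{-1/q} \lessapprox (\la|X|)^{-1/q}$, and combining with $|X|\leq R/\la$ yields
$$
\Big|\int Eg\, d\tilde\mu\Big| \lessapprox w_0 c |X| \lessapprox \frac{\la}{R}\,(\la|X|)^{-1/q}|X| = \frac{\la^{1-1/q}|X|^{1-1/q}}{R} \leq R^{-1/q} = R^{-1/(2p)},
$$
with the powers of $\la$ cancelling exactly at the end.

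The main obstacle I anticipate is ensuring uniformity in the dyadic mass level $\la = Rw_0$, which can range from $R^{-O(1)}$ (corresponding to weights far below the Frostman maximum $R^{-1}$) to $O(1)$. When $\la \ll 1$ the set $X$ has a poor Katz-Tao constant, but the factor of $\la$ lost in extracting a genuine Katz-Tao subset via Lemma \ref{katz-tao-set-lem} is precisely compensated by $|X|\leq R/\la$, so the final bound is uniform. The calibration $q = 2p$ is forced by matching the $R^{-1/q}$ exponent arising from Hölder with the target decay rate $R^{-1/(2p)}$, and this choice lines up exactly with the admissible range $q \in [18/5, 4]$ of Proposition \ref{weighted-lp-prop}, producing the exponent range $p \in [9/5, 2]$ stated in the theorem.
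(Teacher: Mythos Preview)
Your argument is correct and follows essentially the same route as the paper: dualize, pigeonhole to a constant mass level $\lambda=Rw_0$, extract a genuine Katz--Tao $(R^{-1},1)$ subset via Lemma~\ref{katz-tao-set-lem}, and apply Proposition~\ref{weighted-lp-prop} with $q=2p$ so that $2/q+1/p'=1$; the paper merely swaps the order (decomposing $\mu=\sum_\lambda\mu_\lambda$ before dualizing) and sums over $\lambda$ at the end. One small imprecision: the claim ``$\sum_{j\in J}w_j\gtrapprox 1$'' does not follow from pigeonholing the weighted sum $\sum_j w_j|Eg|(x_j)$, but you never use it---the bound $|X|\lesssim R/\lambda$ only needs the trivial upper bound $\sum_{j\in J}w_j\leq 1$.
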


\begin{proof}

Partition $[0,1]^2$ into non-overlapping $R^{-1}$-squares $\cb$.
For a dyadic number $\la\in(0,R^{-1}]$, let $\cb_\la=\{B\in\cb: \mu(B)\sim \la\}$ and $X_\la=\cup_{\cb_\la}$.
Let $\mu_\la$ be the restriction of $\mu$ on $X_\la$, so we have the partition
\begin{equation}
\label{mu-lambda}
    \mu=\sum_{\la}\mu_\la.
\end{equation}

First, note that the contribution from $\sum_{\la\leq R^{-10}}\mu_\la$ is negligible, as
\begin{equation}
\label{mu-lambda-small}
    \big\|\big(\sum_{\la\leq R^{-10}}\mu_\la\big)^\wedge\big\|_\infty\leq \sum_{B\in\cb}\sum_{\la\leq R^{-10}}\mu_\la(B)\lesssim R^2R^{-10}\leq R^{-8}.
\end{equation}

Next, we fix a dyadic $\la\in[R^{-10},R^{-1}]$.
Since $\mu_\la(B_r)\leq\mu(B_r)\lesssim r$, we have
\begin{equation}
    \frac{|X_\la\cap B_r|R^2}{rR}\lesssim (\la R)^{-1},
\end{equation}
which shows that $\ga_{X_\la}\lesssim (\la R)^{-1}$.
Let $S=\ZS^1$ be the unit circle and recall $E_S$ in \eqref{extension}.
For $p\in[9/5,2]$, by duality, there exists an $f$ with $\|f\|_{p'}=1$ such that
\begin{equation}
\label{decay-lambda}
    \Big(\int_{\ZS^1}|\wh \mu_\la(R\xi)|^pd\si(\xi)\Big)^{1/p}=\int_{\ZS^1}\wh \mu_\la(R\xi)f(\xi)d\si(\xi)\sim\int E_Sf(Rx) d\mu_\la(x).
\end{equation}
Since $\wh {E_Sf(R\cdot)}$ is supported in an $R$-ball, by the uncertainty principle, 
\begin{equation}
    \int E_Sf(Rx) d\mu_\la(x)\lesssim\la R^2\int_{X_\la} |E_Sf(Rx)| dx=\la \int_{\wt X_\la} |E_Sf|.
\end{equation}
Here $\wt X_\la$ is the $R$-dilate of $X_\la$.

Similar to the proof of Corollary \ref{main-cor}, either we have $\|E_Sf\|_{L^1(X_\la)}\lesssim R^{-10}\|f\|_{p'}$, trivially yielding what we want, or by dyadic pigeonholing, there exists a union of unit balls $\wt X_\la'\subset\wt X_\la$ such that 
\begin{enumerate}
    \item $\|E_Sf\|_{L^1(B)}$ are about the same for all unit balls $B\subset \wt X_\la'$.
    \item We have
    \begin{equation}
       \int E_Sf(Rx) d\mu_\la(x)\lessapprox \la \int_{\wt X_\la'} |E_Sf|.
    \end{equation}
\end{enumerate}
Since $\ga_{X_\la}\lesssim (\la R)^{-1}$, by Lemma \ref{katz-tao-set-lem}, there exists a union of unit balls $\wt X_\la''\subset\wt X_\la'$ such that $(\la R)^{-1}|\wt X_\la''|\gtrapprox |\wt X_\la'|$, and the $R^{-1}$-dilate of $\wt X_\la''$ is a Katz-Tao $(R^{-1},1)$-set.
Let $p'=p/(p-1)$.
Thus, by taking $q$ such that $2/q+1/p'=1$, we have
\begin{equation}
    \int E_Sf(Rx) d\mu_\la(x)\lessapprox \la(\la R)^{-1} \int_{\wt X_\la''} |E_Sf|\lesssim R^{-1}|\wt X_\la''|^{1-1/q}\|E_Sf\|_{L^q(\wt X_\la'')}.
\end{equation}
Since $|\wt X_\la''|\leq R$ and since $1/q=1/2p$, apply Proposition \ref{weighted-lp-prop} to have
\begin{equation}
    \int E_Sf(Rx) d\mu_\la(x)\lessapprox R^{-1/q}\|f\|_{p'}= R^{-1/2p}.
\end{equation}
Recall \eqref{mu-lambda} and \eqref{mu-lambda-small}.
Sum up all dyadic $\la\in[R^{-10},R^{-1}]$ in \eqref{decay-lambda} by the triangle inequality to conclude the proof.
\qedhere

\end{proof}

\medskip 

\subsubsection{$L^p$ estimate for maximal extension operator}

\begin{proof}[Proof of Theorem \ref{maximal-exten-thm-intro}]
By the uncertainty principle and an epsilon removal argument similar to the one in \cite[Appendix]{Wu-refined-Strichartz}, Theorem \ref{maximal-exten-thm-intro} follows from Proposition \ref{weighted-lp-prop} directly. 
\end{proof}

\medskip

\subsection{\texorpdfstring{$L^p$}{} estimate for the maximal Schr\"odinger operator}

Via \cite[Proposition 5.1]{Lee-Rogers-Seeger} and rescaling, Theorem \ref{maximal-schrodinger-thm} boils down to the following local result.
\begin{proposition}
\label{Schrodinger-prop}
Let $q=18/5$ and $p$ be such that $2/q+1/p=1$.
Then for all $\e>0$, there exists $C_\e$ such that the following is true for all $R\geq1$:
For all $f$ whose Fourier transform is supported in $[-1,1]$, we have
\begin{equation}
    \|e^{it\De}f\|_{L^q_xL^\infty_t(\ZR\times[-R,R])}\leq C_\e R^\e R^{\frac{1}{2}-\frac{1}{p}}\|f\|_{p}.
\end{equation}
\end{proposition}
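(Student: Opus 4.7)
The plan is to combine a broad--narrow decomposition with induction on the scale $R$: the broad piece will be handled by the $L^2$ estimate of Corollary \ref{main-cor}, while the narrow piece will be reduced to the same proposition at the smaller scale $R/K^2$ via parabolic rescaling. Setting $\al=1/2-1/p=1/18$ and $g=\wh f$, we have $e^{it\De}f(x)=Eg(x,t)$. I would first pick $t(x)\in[-R,R]$ realizing $\sup_{|t|\leq R}|e^{it\De}f(x)|$ up to a factor of $2$ by measurable selection, so that $\|\sup_t|e^{it\De}f|\|_{L^q_x}^q\lesssim \|Eg\|_{L^q(X)}^q$ with $X=\{(x,t(x))\}$ a graph. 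Each vertical column meets $X$ in one point, so after localizing $x$ to a length-$R$ interval, $X\subset B_R$ is a union of $\lesssim R$ unit balls whose $R^{-1}$-dilate is automatically Katz-Tao $(R^{-1},1)$. The spatial localization is carried out via a standard finite-speed-of-propagation argument (frequencies in $[-1,1]$ have group velocity bounded by $2$, plus Schwartz decay of the free kernel), decomposing $\ZR=\bigsqcup_k I_k$ into $R$-intervals and summing local bounds via $\ell^p\hookrightarrow \ell^q$ ($p\leq q$).

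Next I would apply Lemma \ref{broad-narrow-lem} at a small parameter $K=R^{\e^c}$ (with $c$ chosen so that $K$ satisfies the hypothesis of Corollary \ref{main-cor} at an $\e$-level much smaller than the target $\e$) and a prefactor $A$ slightly larger than $1$ but well below $K$, yielding $|Eg|\lesssim A\max_\si|Eg_\si|+K|\BR_A Eg|$. For the broad term, another measurable selection produces a graph $X_1$ on which Corollary \ref{main-cor} gives
\[
K\|\BR_A Eg\|_{L^q(X_1)}\leq C_{\e_0}K R^{\e_0}\|g\|_2=C_{\e_0}KR^{\e_0}\|f\|_2\leq C_\e R^{\e+\al}\|f\|_p,
\]
using Plancherel and Hölder's inequality $\|f\|_2\leq(2R)^{1/2-1/p}\|f\|_p$ for $f$ essentially supported in a length-$R$ interval and $p\geq 2$. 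This step is crucial: the $L^2$ right-hand side of Corollary \ref{main-cor} is precisely what produces the target exponent $R^\al$, something that would be lost if one directly invoked Proposition \ref{weighted-lp-prop} with its $L^p$ right-hand side.

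For the narrow piece I would rescale: for each $K^{-1}$-cap $\si$ centered at $\xi_\si$, the parabolic change of variables $(x',t')=(K^{-1}(x+2t\xi_\si),K^{-2}t)$ turns $|e^{it\De}f_\si(x)|$ into $|e^{it'\De}\tilde f_\si(x')|$ with $\|\tilde f_\si\|_p=K^{-1/p}\|f_\si\|_p$. At fixed $x$, varying $t\in[-R,R]$ traces out a slanted line segment in the rescaled plane with $|t'|\leq R/K^2$ and $x'$ in an interval of length $\lesssim R/K$ centered at $x/K$. I would dominate this line-sup by a sliding sup in $x'$ of the standard maximal Schr\"odinger operator of $\tilde f_\si$, using the $L^q$-boundedness of the Hardy--Littlewood maximal function ($q>1$), to obtain
\[
\|\sup_t|e^{it\De}f_\si|\|_{L^q_x}^q\lesssim K\|e^{it'\De}\tilde f_\si\|_{L^q_{x'}L^\infty_{t'}([-R/K^2,R/K^2])}^q.
\]
Invoking the inductive hypothesis at scale $R/K^2$, summing over the $\sim K$ caps via $(\max_\si)^q\leq\sum_\si$, and using $\sum_\si\|f_\si\|_p^q\lesssim K\|f\|_p^q$ (since $\|f_\si\|_p\lesssim\|f\|_p$ by Young), the various $K$-exponents collapse to $K^{-2\e}$ precisely because $2/q+1/p=1$. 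Combined with the prefactor $A$ (chosen small enough to absorb the $K^{-2\e}$), the narrow contribution is $\leq C_\e R^{\e+\al}\|f\|_p$, and adding the broad bound closes the induction after the standard fix-up of $\e$.

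The hardest step I anticipate is the narrow one: the parabolic rescaling does \emph{not} send $\sup_{t\in[-R,R]}$ at fixed $x$ into a standard maximal Schr\"odinger sup at scale $R/K^2$, but into a sup along a slanted line whose slope depends on $\xi_\si$. Converting this line-sup into an axis-aligned two-variable sup over a rectangle (which then factors into a maximal Schr\"odinger sup composed with a Hardy--Littlewood slide) is the key geometric maneuver and is what allows the inductive hypothesis to apply. A secondary consistency check is that the $K$-exponents from rescaling $\|\tilde f_\si\|_p=K^{-1/p}\|f_\si\|_p$, from summing over caps, and from the $L^q_x\to L^\infty_t$ change of variables combine to exactly $-2\e$ under the scaling relation $2/q+1/p=1$, which is precisely the hypothesis of the proposition.
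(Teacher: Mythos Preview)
Your overall architecture---linearize the $\sup_t$ to a weighted $L^q$ estimate on a graph set $X$, run broad--narrow, feed the broad part into Corollary~\ref{main-cor} together with $\|f\|_2\le R^{1/2-1/p}\|f\|_p$, and close the narrow part by parabolic rescaling and induction on $R$---is exactly the paper's. The broad case is handled correctly and essentially identically.

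The genuine gap is in your narrow step. After recentring $\hat f_\si$ to the origin you correctly observe that the map $(x,t)\mapsto(K^{-1}(x+2t\xi_\si),K^{-2}t)$ sends the vertical segment $\{x\}\times[-R,R]$ to a slanted segment, and you propose to dominate the slanted sup by $\sup_{|z-y|\le r}G(z)$ with $G(z)=\sup_{t'}|e^{it'\Delta}\tilde f_\si(z)|$ and $r\sim R/K$, then invoke the Hardy--Littlewood maximal theorem. This does not work: Hardy--Littlewood controls averages, not local suprema, and in fact $\|\sup_{|z-\cdot|\le r}G\|_{L^q}$ is not bounded by $C\|G\|_{L^q}$ with $C$ independent of $r$. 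Concretely, take $\tilde f_\si$ to be a single scale-$R/K^2$ wave packet with $\hat{\tilde f_\si}$ centered at $0$; its space--time support is a vertical $R^{1/2}K^{-1}\times RK^{-2}$ tube, so the vertical maximal function has $L^q$ norm $\sim(\mathrm{height})\,(R^{1/2}/K)^{1/q}$, while the slanted one (slope $2K\xi_\si$, $|\xi_\si|\sim1$) spreads over an $x'$-interval of length $\sim R/K$, giving $L^q$ norm $\sim(\mathrm{height})\,(R/K)^{1/q}$. Your step~(1) thus loses $R^{1/(2q)}$, which destroys the induction. Your $K$-bookkeeping is internally consistent only because this loss is hidden in the unjustified ``$\lesssim$''.

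The paper sidesteps the slanted-line issue entirely by \emph{not} recentring: it sets $\hat g(\xi)=\hat f_\si(K^{-1}\xi)$ (pure dilation), so that the rescaling $(x,t)\mapsto(K^{-1}x,K^{-2}t)$ preserves vertical lines and one has the \emph{exact} identity
\[
\|e^{it\Delta}f_\si\|_{L^q_xL^\infty_t([R/2,R])}=K^{1/q-1}\|e^{it\Delta}g\|_{L^q_xL^\infty_t([R/(2K^2),R/K^2])},
\]
from which the $K$-exponents close with room $K^{-2\e}$ (and the paper uses the sharper $\sum_\si\|f_\si\|_p^p\lesssim\|f\|_p^p$ via disjoint Fourier supports, rather than your lossy $\sum_\si\|f_\si\|_p^q\le K\|f\|_p^q$). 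The price is that $\hat g$ is supported in $K\si$, an interval of length~$1$ centered at $K\xi_\si\in[-K,K]$, so one must tacitly run the induction for $\hat f$ supported in an arbitrary unit interval rather than in $[-1,1]$; this is harmless since the broad input (Corollary~\ref{main-cor}) is stated for any curve with $|\Phi''|\sim1$. You were right that this is the delicate point of the argument, but the resolution is to avoid recentring, not to repair it with a maximal inequality.
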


\begin{proof}
By dyadic decomposition, it suffices to show  
\begin{equation}
\label{for-indcution-Sch}
    \|e^{it\De}f\|_{L^q_xL^\infty_t(\ZR\times[R/2,R])}\leq C_\e R^\e R^{\frac{1}{2}-\frac{1}{p}}\|f\|_{p}.
\end{equation}
For a given $\e$, take $K=R^{\e^8}$ and $A=K^{\e^2}$.
Note that $e^{it\De}f=E\hat f$ for $\Phi=|\xi|^2$ (recall \eqref{extension-reduced}).
Let $f_\si=(\hat f\Id_\si)^\vee$.
By Lemma \ref{broad-narrow},
\begin{align}
\label{broad-narrow-app-2}
    \|e^{it\De}f\|_{L^q_xL^\infty_t(\ZR\times[R/2,R])}^q\lesssim &\,K^{O(\e^2)}\sum_\si\|e^{it\De}f_\si\|_{L^q_xL^\infty_t(\ZR\times[R/2,R])}^q\\
    &+K^{O(1)}\|\BR_A E\hat f\|_{L^q_xL^\infty_t(\ZR\times[R/2,R])}^q.
\end{align}
We distinguish two cases.

\smallskip

{\bf Case 1.} Suppose the first term in \eqref{broad-narrow-app-2} dominates.
For each $\si$ centered at $\xi_\si$, let $g$ be such that let $\hat g(\xi)=\hat f(K^{-1}(\xi-\xi_\si))$, so $\|g\|_p=K^{1-1/p}\|f_\si\|_p$.
Via the parabolic rescaling \eqref{parabolic-rescaling}, we have
\begin{equation}
    \|e^{it\De}f_\si\|_{L^q_xL^\infty_t(\ZR\times[R/2,R])}=K^{1/q-1}\|e^{it\De}g\|_{L^q_xL^\infty_t(\ZR\times[RK^{-2}/2,RK^{-2}])}.
\end{equation}
Apply \eqref{for-indcution-Sch} at scale $R/K^2$ so that
\begin{align}
    \|e^{it\De}g\|_{L^q_xL^\infty_t(\ZR\times[RK^{-2}/2,RK^{-2}])}&\leq C_\e (R/K^2)^\e (R/K^2)^{\frac{1}{2}-\frac{1}{p}}\|g\|_p\\
    &= K^{-2\e}K^{1/p}C_\e R^\e R^{\frac{1}{2}-\frac{1}{p}}\|f_\si\|_p.
\end{align}
Since $1/p+1/q\leq1$, summing over all $\si$ in \eqref{broad-narrow-app-2}, the above estimates show that
\begin{equation}
    \|e^{it\De}f\|_{L^q_xL^\infty_t(\ZR\times[R/2,R])}^q\lesssim K^{O(\e^2)-2q\e}C_\e R^\e \sum_\si\|f_\si\|_p^q.
\end{equation}
This concludes the first case, since $\sum_\si\|f_\si\|_p^q\leq \big(\sum_\si\|f_\si\|_p^p\big)^{q/p}\lesssim \|f\|_p^q$, where the last inequality follows from the fact that the  Fourier transforms of $\{f_\si\}$ are contained in finite-overlapping $K^{-1}$-intervals in $\ZR$.

\smallskip

{\bf Case 2. } Suppose the second term in \eqref{broad-narrow-app-2} dominates.
We will prove 
\begin{equation}
    \|\BR_A E\hat f\|_{L^q_xL^\infty_t(\ZR\times[R/2,R])}\lesssim R^{O(\e^2)}R^{\frac{1}{2}-\frac{1}{p}}\|f\|_{p},
\end{equation}
which concludes the second case.

Since $\hat f$ is supported in the interval $[-1,1]$, by Lemma \ref{wpt} and the uncertainty principle, it suffices to show 
\begin{equation}
\label{local-schrodinger}
    \|\BR_A E\hat f\|_{L^q(X)}\lesssim R^{\e^2}R^{\frac{1}{2}-\frac{1}{p}}\|f\|_{p}
\end{equation}
for all $f$ supported in an interval of length $R$, and all $X\subset B_R$ that the $R^{-1}$-dilate of $X$ is a Katz-Tao $(R^{-1},1)$-set.

Apply Corollary \ref{main-cor} with $\e:=\e^2$ so that 
\begin{equation}
    \|\BR_A E\hat f\|_{L^q(X)}\lesssim R^{\e^2}\|f\|_2.
\end{equation}
Since $f$ is supported in an interval of length $R$, this concludes \eqref{local-schrodinger} by H\"older's inequality.
\qedhere

\end{proof}

\medskip

\subsection{\texorpdfstring{$L^p$}{} variant of the Mizohata–Takeuchi conjecture}

\begin{proof}[Proof of Theorem \ref{MT-thm}]

Let $e_2$ be the vertical direction, and define $w'_R(X)$ as
\begin{equation}
\label{weight-1}
    w_R'(X)=\sup_{\substack{T:\text{ a $1\times R$-tube}\\ \ang(T, e_2)\leq 1/2}}|T\cap X|.
\end{equation}
We will prove
\begin{equation}
\label{MT-Lp-2}
    \|Ef\|_{L^p(X)}^p\leq C_\e R^\e w_R'(X)\|f\|_2^p
\end{equation}
for any fixed $\e>0$, by an induction on $R$.
Note that \eqref{MT-Lp-2} implies \eqref{MT-Lp}.

\smallskip

Fix $\e$, and take $K=R^{\e^8}$ and $A=K^{\e^2
}$.
By Lemma \ref{broad-narrow-lem}, we have
\begin{equation}
\label{broad-narrow-app-3}
    \|Ef\|_{L^p(X)}^p\lesssim K^{O(\e^2)}\sum_\si\int_X|Ef_\si|^p+K^{O(1)}\int_X|\BR_A Ef|^p.
\end{equation}
We distinguish two cases.

\smallskip

{\bf Case 1.} Suppose the first term in \eqref{broad-narrow-app-3} dominates.
Fix a $\si$ centered at $\xi_\si$, let $g(\xi)=f(K^{-1}(\xi-\xi_\si))$.
Then, recalling \eqref{parabolic-rescaling}, we have
\begin{equation}
    \int_X|Ef_\si|^p=K^{3-p}\int_{\cl_\si(X)}|Eg|^p.
\end{equation}
By Lemma \ref{wpt}, it suffices to assume $L_{\si}(X)$ is contained in an $R/K^2$-ball.
Apply \eqref{MT-Lp-2} at scale $R/K^2$ so that
\begin{equation}
    \int_{\cl_\si(X)}|Eg|^p\leq C_\e (R/K^2)^\e w_{R/K^2}'(\cl_\si(X))\|g\|_2^p.
\end{equation}
Notice that $\|g\|_2^2= K\|f_\si\|_2^2$, and by the triangle inequality, we have
\begin{equation}
    w_{R/K^2}'(\cl_\si(X))=K^{-3}\sup_{\substack{T:\text{ a $K\times R$-tube}\\ \ang(T, e_2)\leq 1/2}}|T\cap X|\leq K^{-2}w_R'(X).
\end{equation}
Thus, the above estimates give
\begin{equation}
    \int_X|Ef_\si|^p\leq C_\e R^\e K^{-2\e} K^{1-p/2}w_R'(X)\|f_\si\|_2^p.
\end{equation}
Sum up all $\si$ in \eqref{broad-narrow-app-3} so that 
\begin{equation}
    \|Ef\|_{L^p(X)}^p\lesssim (K^{O(\e^2)-2\e+1-p/2}) C_\e R^\e w_R'(X)\|f\|_2^p.
\end{equation}
This concludes \eqref{MT-Lp-2} for the first case, as $p\geq 2$.

\smallskip

{\bf Case 2. } Suppose the second term in \eqref{broad-narrow-app-3} dominates.
Partition $B_R$ into finite-overlapping unit balls $\cb$.
For a dyadic number $\la$, let $\cb_\la=\{B\in\cb: |B\cap X|\sim\la |B|\}$ and let $X_\la=X\cap (\cup_{\cb_\la})$.
Thus, by the uncertainty principle, we have
\begin{equation}
\label{sum-lambda}
    \|\BR_{A}Ef\|_{L^p(X)}^p=\sum_{\la}\|\BR_{A}Ef\|_{L^p(X_\la)}^p\sim\sum_{\la}\la\cdot\|\BR_{A}Ef\|_{L^p(\cup_{\cb_\la})}^p.
\end{equation}

Fix $\la$.
For a dyadic number $\mu$, let $\cb_{\la,\mu}=\{B\in\cb_\la: \|\BR_{A}Ef\|_{L^p(B)}\sim\mu\|f\|_2\}$.
Thus, with the notation $\wt X_{\la,\mu}=\cup_{\cb_{\la,\mu}}$, we have
\begin{equation}
\label{sum-mu}
    \|\BR_{A}Ef\|_{L^p(\cup_{\cb_\la})}^p\lesssim \sum_\mu\|\BR_{A}Ef\|_{L^p(\wt X_{\la,\mu})}^p.
\end{equation}
Recall \eqref{katz-tao-constant}.
Let $\ga=\ga_{\wt X_{\la,\mu}}$.
Then there exists a ball $B_r$ such that $|\wt X_{\la,\mu}\cap B_r|\sim \ga r$, which implies that
\begin{equation}
    w_R'(\wt X_{\la,\mu})\geq\sup_{\substack{T:\text{ a $1\times R$-tube}\\ \ang(T, e_2)\leq 1/2}}|T\cap \wt X_{\la,\mu}\cap B_r|\gtrsim \ga.
\end{equation}
By Lemma \ref{katz-tao-set-lem}, there exists $\wt X_{\la,\mu}'\subset\wt X_{\la,\mu}$ such that
\begin{enumerate}
    \item $|\wt X_{\la,\mu}|\lessapprox \ga|\wt X_{\la,\mu}'|\lesssim w_R'(\wt X_{\la,\mu})|\wt X_{\la,\mu}'|$.
    \item The $R^{-1}$-dilate of $\wt X_{\la,\mu}'$ is an $(R^{-1},1)$-set.
\end{enumerate}
Thus, for each $\mu$, by Corollary \ref{main-cor} with $\e:=\e^2$, we have 
\begin{align}
\label{main-thm-app-3}
    \|\BR_{A}Ef\|_{L^p(\wt X_{\la,\mu})}^p\lessapprox w_R'(\wt X_{\la,\mu})\|\BR_{A}Ef\|_{L^p(\wt X_{\la,\mu}')}^p\lessapprox R^{O(\e^2)} w_R'(\wt X_{\la,\mu})\|f\|_2^p.
\end{align}

Note that $\mu\lesssim R^{10}$.
Also, when $\mu\leq R^{-100}$, by trivial estimates, we have $\|\BR_{A}Ef\|_{L^p(\wt X_{\la,\mu})}^p\lesssim \mu^p R^2\|f\|_2^p\lesssim \mu^{1/2}\|f\|_2^p$.
Thus, via \eqref{main-thm-app-3}, we can sum up all dyadic $\mu$ in \eqref{sum-mu} to get (note $w_R'(\wt X_{\la,\mu})\leq w_R'(\cup_{\cb_\la})$) 
\begin{equation}
    \|\BR_{A}Ef\|_{L^p(\cup_{\cb_\la})}^p\lessapprox  R^{O(\e^2)} w_R'(\cup_{\cb_\la})\|f\|_2^p.
\end{equation}
Sum up all $\la$ in \eqref{sum-lambda} so that
\begin{equation}
\label{after-sum-lambda}
    \|\BR_{A}Ef\|_{L^p(X)}^p\sim\sum_{\la}\la\cdot\|\BR_{A}Ef\|_{L^p(\cup_{\cb_\la})}^p\lessapprox R^{O(\e^2)}\sum_\la \la \cdot w_R'(\cup_{\cb_\la})\|f\|_2^p.
\end{equation}

For each $\la$, notice that 
\begin{equation}
\label{lambda-vs-weight}
    \la\cdot w_R'(\cup_{B_\la})\sim w_R'(X_\la)\leq w_R'(X).
\end{equation}
Let $\la_{\max}=\sup\{\la:\cb_\la\not=\varnothing\}$, so that $\la_{\max}\leq w_R'(X)$.
Since $w_R'(\cup_{\cb_\la})\leq R$, this shows that
\begin{align}
    \sum_{\la\leq R^{-10}\la_{\max}} \la \cdot w_R'(\cup_{\cb_\la})\|f\|_2^p\leq R\sum_{\la\leq R^{-10}\la_{\max}} \la \|f\|_2^p\leq w_R'(X)\|f\|_2^p.
\end{align}
Put this back to \eqref{after-sum-lambda} and sum over dyadic $\la\in[R^{-10}\la_{\max}, \la_{\max}]$ using \eqref{lambda-vs-weight} to obtain
\begin{equation}
    \|\BR_{A}Ef\|_{L^p(X)}^p\lessapprox R^{O(\e^2)}w_R'(X)\|f\|_2^p.
\end{equation}
This concludes \eqref{MT-Lp-2} for the second case. 
\qedhere

\end{proof}

\bigskip

\bibliographystyle{alpha}
\bibliography{bibli}

\end{document}